\numberwithin{equation}{section}
\newtheorem{theorem}{Theorem}[section]
\newtheorem{definition}[theorem]{Definition}
\newtheorem{proposition}[theorem]{Proposition}
\newtheorem{corollary}[theorem]{Corollary}
\theoremstyle{definition}
\newtheorem{example}[theorem]{Example}
\newtheorem{remark}[theorem]{Remark}
\newcommand{\cU}{\mbox{${\cal U}$}}
\newcommand{\cW}{\mbox{${\cal W}$}}
\title{\textbf{Computation of point modules\\ of finitely semi-graded rings}}
\author{Oswaldo Lezama\\
\texttt{jolezamas@unal.edu.co}
\\ Seminario de Álgebra Constructiva - SAC$^2$\\ Departamento de Matemáticas\\ Universidad Nacional de
Colombia, Sede Bogotá}
\date{}
\begin{document}
\maketitle
\begin{abstract}
\noindent In this paper we compute the set of point modules of
finitely semi-graded rings. In particular, from the
parametrization of the point modules for the quantum affine
$n$-space, the set of point modules for some important examples of
non $\mathbb{N}$-graded quantum algebras is computed.

\bigskip

\noindent \textit{Key words and phrases.}  Point modules, point functor, Zariski topology, strongly
noetherian algebras, finitely semi-graded rings, skew $PBW$ extensions.

\bigskip

\noindent 2010 \textit{Mathematics Subject Classification.} Primary: 16S38. Secondary: 16W50,
16S80, 16S36.
\end{abstract}

\section{Introduction}

In algebraic geometry a key role play the point modules and its
parametrization. The point modules of algebras, and the spaces
parameterizing them, were first studied by Artin, Tate and Van den
Bergh (\cite{Artin1}) in order to complete the classification of
Artin-Schelter regular algebras of dimension $3$. Many commutative
and non-commutative graded algebras have nice parameter spaces of
point modules. For example, let $K$ be a field and
$A=K[x_0,\dots,x_{n}]$ be the commutative $K$-algebra of usual
polynomials, then there exists a bijective correspondence between
the projective space $\mathbb{P}^n$ and the set of isomorphism
classes of point modules of $A$; this correspondence generalizes
to any finitely graded commutative algebra generated in degree
$1$. Similarly, the point modules of the quantum plane and the
Jordan plane are parametrized by $ \mathbb{P}^{1}$. The study of
the point modules and its parametrization has been concentrated in
finitely graded algebras (see \cite{Artin1}, \cite{Artin4},
\cite{Goetz}, \cite{Kanazawa}, \cite{Rogalski}, \cite{Smith}), in
the present paper we are interested in the computation of point
modules for algebras that are not necessarily $\mathbb{N}$-graded,
examples of such algebras are the quantum algebra
$\mathcal{U}'(so(3,K))$, the dispin algebra $\cU(osp(1,2))$, the
Woronowicz algebra $\cW_{\nu}(\mathfrak{sl}(2,K))$, among many
others. In \cite{lezamalatorre} were introduced the semi-graded
rings and for them was proved a classical theorem of
non-commutative algebraic geometry, the Serre-Artin-Zhang-Verevkin
theorem about non-commutative projective schemes (see also
\cite{Lezama}). Thus, the semi-graded rings are nice objects for
the investigation of geometric properties of non-commutative
algebras that are not $\mathbb{N}$-graded, and include as a
particular case the class of finitely graded algebras. In
\cite{Lezama-Gomez} was initiated the study the point modules of
finitely semi-graded rings, in the present paper we complete this
investigation and  we will compute the point modules of finitely
semi-graded rings generated in degree one. In particular, from the
parametrization of the point modules for the quantum affine
$n$-space, the set of point modules for some important examples of
non $\mathbb{N}$-graded quantum algebras is computed

The paper is organized as follows: In the first section we recall
some basic facts and examples on point modules for classical
$\mathbb{N}$-graded algebras, we include a complete proof (usually
not available in the literature) of the parametrization of the
point modules for the quantum affine $n$-space (Example
\ref{remark17.6.11}); we review the definition and elementary
properties of semi-graded rings, in particular, the subclass of
finitely semi-graded rings and modules. As was pointed out above,
the finitely semi-graded rings generalize the finitely graded
algebras, so in order present sufficient examples of finitely
semi-graded rings not being necessarily finitely graded algebras,
we recall in the last part of the first section the notion of skew
$PBW$ extension defined firstly in \cite{LezamaGallego}. Skew
$PBW$ extensions represent a way of describing many important
non-commutative algebras not necessarily $\mathbb{N}$-graded,
remarkable examples are the operator algebras, algebras of
diffusion type, quadratic algebras in $3$ variables, the quantum
algebra $\mathcal{U}'(so(3,K))$, the dispin algebra
$\cU(osp(1,2))$, the Woronowicz algebra
$\cW_{\nu}(\mathfrak{sl}(2,K))$, among many others. The second and
the third sections contain the novelty of the paper. The main
results are Theorem \ref{theorem17.6.8}, which completes the
parametrization started in \cite{Lezama-Gomez}, and Theorem
\ref{corollary17.6.9} where we apply the results of the previous
sections to compute the point modules of some skew $PBW$
extensions, and hence, the point modules of some examples of non
$\mathbb{N}$-graded quantum algebras. Concrete examples are
presented in Examples \ref{example3.2} and \ref{example3.3}.

\subsection{Point modules for finitely graded algebras}

In this first subsection we recall some key facts and examples on
point modules for classical $\mathbb{N}$-graded algebras that we
will use later in the paper (see \cite{Artin1}, \cite{Artin4} and
\cite{Rogalski}). Let $K$ be a field, a $K$-algebra $A$ is
\textit{finitely graded} if the following conditions hold: (i) $A$
is $\mathbb{N}$-graded, $A=\bigoplus_{n\geq 0}A_n$. (ii) $A$ is
connected, i.e., $A_0=K$. (iii) $A$ is finitely generated as
$K$-algebra. Let $A$ be a finitely graded $K$-algebra that is
generated in degree $1$. A \textit{point module} for $A$ is a
graded left module $M$ such that $M$ is cyclic, generated in
degree $0$, i.e., there exists an element $m_0\in M_0$ such that
$M=Am_0$, and $\dim_K(M_n)=1$ for all $n\geq 0$ (since $A$ is
$\mathbb{N}$-graded and $m_0\in M_0$, then $M$ is necessarily
$\mathbb{N}$-graded). The collection of isomorphic classes of
point modules for $A$ is denoted by $P(A)$.

The following examples show the description (parametrization) of
point modules for some well-known finitely graded algebras.

\begin{example}[\cite{Rogalski}]\label{17.3.2}
Let $K$ be a field and $A=K[x_0,\dots,x_{n}]$ be the commutative
$K$-algebra of usual polynomials. Then,
\begin{enumerate}
\item[\rm (i)]There exists a bijective correspondence between the
projective space $\mathbb{P}^n$ and the set of isomorphism classes
of point modules of $A$, $\mathbb{P}^n\longleftrightarrow P(A)$.
\item[\rm (ii)]The correspondence in $(i)$ generalizes to any finitely graded
commutative algebra which is generated in degree $1$.
\end{enumerate}
\end{example}

\begin{example}[\cite{Rogalski}]\label{exampleA.3.4}
(i) The point modules of the quantum plane are parametrized by
$\mathbb{P}^1$, i.e., there exists a bijective correspondence
between the projective space $\mathbb{P}^1$ and the collection of
isomorphism classes of point modules of the quantum plane $A:=K\{
x,y\}/\langle yx-qxy\rangle$:
\begin{center}
$\mathbb{P}^1\longleftrightarrow P(A)$.
\end{center}
(ii) For the Jordan plane, $A:=K\{ x,y\}/\langle
yx-xy-x^2\rangle$, can be proved a similar equivalence:
\begin{center}
$\mathbb{P}^1\longleftrightarrow P(A)$.
\end{center}
\end{example}

\begin{example}[\cite{Rogalski}]\label{example16.1.34}
The isomorphism classes of point modules for the free algebra
$A:=K\{x_0,\dots,x_n\}$ are in bijective correspondence with
$\mathbb{N}$-indexed sequences of points in $\mathbb{P}^n$,
$\{(\lambda_{0,i}:\cdots :\lambda_{n,i})\in \mathbb{P}^n|i\geq
0\}$, in other words, with the points of the infinite product
$\mathbb{P}^n\times \mathbb{P}^n\times \cdots=\prod_{i=0}^\infty
\mathbb{P}^n$(note that an element of $\prod_{i=0}^\infty
\mathbb{P}^n$ is a sequence of the form $((\lambda_{0,i}:\cdots
:\lambda_{n,i}))_{i\geq 0})$. Thus, there exists a bijective
correspondence between $\prod_{i=0}^\infty \mathbb{P}^n$ and
$P(K\{x_0,\dots,x_n\})$:
\begin{center}
$\prod_{i=0}^\infty \mathbb{P}^n\longleftrightarrow P(A)$.
\end{center}
\end{example}

Using the previous example it is possible to compute the
collection of point modules for any finitely presented algebra.
For this we need a definition first, see \cite{Rogalski}. Suppose
that $f\in K\{x_0,\dots,x_n\}$ is a homogeneous element of degree
$m$ $($assuming that $\deg(x_v):=1$, for all $v$$)$. Consider the
polynomial ring $B=K[y_{v,u}]$ in a set of $(n+1)m$ commuting
variables $\{y_{v,u}|0\leq v\leq n,1\leq u\leq m\}$. The
\textit{multilinearization} of $f$ is the element of $B$ given by
replacing each word $w = x_{v_m}\cdots x_{v_2}x_{v_1}$ occurring
in $f$ by $y_{v_m,m}\cdots
y_{v_2,2}y_{v_1,1}(=y_{v_1,1}y_{v_2,2}\cdots y_{v_m,m})$.

\begin{proposition}[\cite{Rogalski}, Proposition 3.5]\label{17.3.6}
Let $A=K\{x_0,\dots,x_n\}/\langle f_1,\dots,f_r\rangle$ be a
finitely presented $K$-algebra, where the $f_l$ are homogeneous of
degree $d_l\geq 2$, $1\leq l\leq r$. For each $f_l$, let $g_l$ be
the multilinearization of $f_l$. Then,
\begin{enumerate}
\item[\rm (i)]The isomorphism classes of point modules for $A$ are
in bijection with the closed subset $X$ of $\prod_{i=0}^\infty
\mathbb{P}^n$ defined by
\begin{center}
$X:=\{(p_0,p_1,\dots )|g_l(p_i,p_{i+1},\dots,p_{i+d_l-1})=0\
\text{for all}\ 1\leq l\leq r, i\geq 0\}$.
\end{center}
\end{enumerate}
\item[\rm (ii)]Consider for each $m\geq 1$ the closed subset
\begin{center}
{\footnotesize $X_m:=\{(p_0,p_1,\dots
,p_{m-1})|g_l(p_i,p_{i+1},\dots,p_{i+d_l-1})=0\ \text{for all}\
1\leq l\leq r, 0\leq i\leq m-d_l\}$}
\end{center}
of $\prod_{i=0}^{m-1}\mathbb{P}^n$. The canonical projection onto
the first m coordinates defines a function $\phi_m:X_{m+1}\to
X_m$. Then $X$ is equal to the inverse limit
$\underleftarrow{\lim}X_m$ of the $X_m$ with respect to the
functions $\phi_m$. In particular, if $m_0$ is such that $\phi_m$
is a bijection for all $m\geq m_0$, then the isomorphism classes
of point modules of $A$ are in bijective correspondence with the
points of $X_{m_0}$.
\end{proposition}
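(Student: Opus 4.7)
The plan is to reduce to Example \ref{example16.1.34} and then impose the defining relations. A point module for $A=K\{x_0,\dots,x_n\}/\langle f_1,\dots,f_r\rangle$ is precisely a point module over the free algebra that is annihilated by every $f_l$. Invoking the parametrization of Example \ref{example16.1.34}, fix a cyclic generator $m_0\in M_0$ and bases $m_i\in M_i$, so that $x_v m_i=\lambda_{v,i}\,m_{i+1}$, giving the associated sequence $(p_i)_{i\geq 0}$ with $p_i=(\lambda_{0,i}:\cdots:\lambda_{n,i})\in\mathbb{P}^n$.

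The key computation is the action of $f_l$ on $m_i$. Writing $f_l=\sum_w \alpha_w\,w$ with $w=x_{v_{d_l}}\cdots x_{v_1}$ and iterating the rule $x_v m_j = \lambda_{v,j} m_{j+1}$, one finds $w\cdot m_i = \lambda_{v_1,i}\,\lambda_{v_2,i+1}\cdots\lambda_{v_{d_l},i+d_l-1}\,m_{i+d_l}$. Summing over $w$, the coefficient of $m_{i+d_l}$ is exactly the multilinearization $g_l$ evaluated at $(p_i,p_{i+1},\dots,p_{i+d_l-1})$, so the equation $f_l m_i = 0$ becomes $g_l(p_i,\dots,p_{i+d_l-1})=0$. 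Since $g_l$ is multilinear in the blocks of variables indexed by $u=1,\dots,d_l$, rescaling any $p_j$ only rescales the value of $g_l$ by the same factor, and the vanishing condition descends to a well-defined condition on $\prod\mathbb{P}^n$. Conversely, any $(p_i)\in X$ is realized by the free-algebra point module $M=\bigoplus K m_i$ with the prescribed action, which descends to an $A$-module because each $f_l$ kills every $m_i$. Changing the basis by $m_i'=c_i m_i$ only rescales $p_i$ coordinatewise, so isomorphism classes correspond bijectively to points of $X$, proving (i).

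Part (ii) is then automatic from the local nature of the defining equations. A sequence $(p_0,p_1,\dots)$ lies in $X$ iff every initial truncation $(p_0,\dots,p_{m-1})$ satisfies those conditions that involve only its entries, i.e., lies in $X_m$. The projection $\phi_m:X_{m+1}\to X_m$ just forgets $p_m$, which discards only the conditions mentioning it, so it is well-defined; the resulting compatible family identifies $X$ with $\underleftarrow{\lim}\,X_m$. When $\phi_m$ is bijective for every $m\geq m_0$, one builds a two-sided inverse to the projection $X\to X_{m_0}$ by iteratively applying $\phi_m^{-1}$ to extend a point of $X_{m_0}$ to its unique compatible infinite continuation.

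I expect the main obstacle to be purely notational: matching the right-to-left ordering of the letters in a monomial $w=x_{v_{d_l}}\cdots x_{v_1}$ with the iterated action on $m_i$ and with the indexing convention $y_{v_k,k}$ of the multilinearization, so that the shift $i\mapsto i+k-1$ comes out correctly and the resulting polynomial is exactly $g_l(p_i,\dots,p_{i+d_l-1})$. Once that identification is pinned down and multilinearity (hence well-definedness on projective coordinates) is verified, both parts of the proposition follow formally from Example \ref{example16.1.34}.
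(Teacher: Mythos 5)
The paper gives no proof of this proposition---it is quoted directly from Rogalski's notes (Proposition 3.5 there)---and your argument is correct and is essentially the standard proof from that source: identify point modules of the free algebra with sequences $(p_i)$ via $x_v m_i=\lambda_{v,i}m_{i+1}$, compute that a word $x_{v_{d_l}}\cdots x_{v_1}$ sends $m_i$ to $\lambda_{v_1,i}\cdots\lambda_{v_{d_l},i+d_l-1}m_{i+d_l}$ so that $f_l M=0$ is exactly the vanishing of the multilinearizations, and then read off the inverse-limit description from the finitary nature of the conditions. Your handling of the two delicate points (multilinearity making the vanishing well defined on $\prod\mathbb{P}^n$, and the rescaling $m_i'=c_im_i$ accounting precisely for graded isomorphism) is what the cited proof requires, so no gap remains.
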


For strongly Noetherian algebras there is $m_0$ such that the
functions $\phi_m:X_{m+1}\to X_m$ are bijective for all $m\geq
m_0$. These algebras were studied by Artin, Small and Zhang in
\cite{Artin2}, and appears naturally in the study of point modules
in non-commutative algebraic geometry (see \cite{Artin1},
\cite{Artin3} and \cite{Rogalski}). Let $K$ be a field and let $A$
be a left Noetherian $K$-algebra, it is said that $A$ is
\textit{left strongly Noetherian} if for any commutative
Noetherian $K$-algebra $C$, $C\otimes_{K}A$
 is left Noetherian.

\begin{corollary}[\cite{Artin4}, Corollary E4.12]\label{corollaryA.3.8}
Let $A$ be a finitely graded strongly noetherian algebra, with
presentation as in Proposition \ref{17.3.6}. Then there is $m_0$
such that $\phi_m:X_{m+1}\to X_m$ is bijective for all $m\geq
m_0$, and the point modules of $A$ are in bijective correspondence
with the points of $X_{m_0}$.
\end{corollary}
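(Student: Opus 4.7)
The plan is to exploit the strong noetherian hypothesis to force the inverse system $\{X_m,\phi_m\}$ of Proposition \ref{17.3.6} to stabilize at a finite stage. First I would observe that each $X_m$ is a closed subscheme of the projective $K$-scheme $\prod_{i=0}^{m-1}\mathbb{P}^n$, hence $X_m$ itself is a projective (and in particular noetherian) $K$-scheme. Moreover, because the multilinearizations $g_l$ have coefficients in $K$ depending only on the relations $f_l$, the same equations cut out, for any commutative $K$-algebra $C$, a closed subscheme of $\prod_{i=0}^{m-1}\mathbb{P}^n_C$ whose $C$-points are exactly the truncated point modules of length $m$ over $A\otimes_K C$. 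Thus $X_m$ represents the functor of truncated point modules of length $m$, and $\phi_m$ corresponds to truncating by one step.

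Next I would form, inside each fixed $X_m$, the descending chain of closed images
\begin{center}
$X_m\supseteq\phi_m(X_{m+1})\supseteq(\phi_m\circ\phi_{m+1})(X_{m+2})\supseteq\cdots$,
\end{center}
each term being closed because morphisms of projective schemes are proper. Since $X_m$ is noetherian, the chain stabilizes at some closed subset $X_m^{\infty}\subseteq X_m$, and the restrictions of the $\phi_m$ yield a projective system of surjections $X_{m+1}^{\infty}\twoheadrightarrow X_m^{\infty}$. This already handles surjectivity level by level; what is missing is a uniform bound and injectivity.

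The decisive step is to upgrade this fibrewise stabilization to a uniform one \emph{and} to obtain injectivity: one must exhibit a single $m_0$ beyond which every $\phi_m$ is bijective. Here the strong noetherian hypothesis enters. Strong noetherianity ensures that $C\otimes_K A$ is left noetherian for every commutative noetherian $K$-algebra $C$, so the moduli functor of point modules extends coherently to arbitrary noetherian test rings; by the representability results of Artin, Small and Zhang this functor is represented by a projective $K$-scheme $P$ equipped with compatible morphisms $P\to X_m$. Because $P$ is noetherian and maps compatibly into every $X_m$, the system of defining equations across all $m$ is finitely generated, which forces a uniform index $m_0$ at which $P\xrightarrow{\sim}X_{m_0}$ and every $\phi_m$ for $m\geq m_0$ is already bijective. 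Proposition \ref{17.3.6}(ii) then identifies the isomorphism classes of point modules of $A$ with $\varprojlim X_m=X_{m_0}$, as required.

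The main obstacle is precisely this last step: plain left noetherianity of $A$ would only yield a stabilization depth depending on $m$, whereas strong noetherianity is what one needs in order to promote fibrewise stabilization into uniform bijectivity across all commutative noetherian base extensions and to guarantee that the surjections produced in the previous paragraph are actually injective.
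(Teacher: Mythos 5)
The paper offers no proof of this statement at all: it is quoted verbatim as Corollary E4.12 of Artin--Zhang's \emph{Abstract Hilbert schemes} and used as a black box, so there is no internal argument to compare yours against. Judged on its own terms, your sketch has the right skeleton for the first half (each $X_m$ is a closed, hence noetherian, subscheme of a product of projective spaces; the images $\phi_m\circ\cdots\circ\phi_{m+k-1}(X_{m+k})$ form a descending chain of closed subsets that stabilizes for each fixed $m$, yielding a tower of surjections between the stable images). That part is sound and is indeed how one gets level-by-level surjectivity.

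The decisive step, however, is asserted rather than proved, and as written it is circular. You invoke ``the representability results of Artin, Small and Zhang'' to produce a projective scheme $P$ representing the point-module functor with compatible maps $P\to X_m$, and then claim that noetherianity of $P$ makes ``the system of defining equations across all $m$ finitely generated,'' forcing $P\xrightarrow{\sim}X_{m_0}$. But the representability of the full point functor by a \emph{projective} scheme, together with the statement that it coincides with some $X_{m_0}$, \emph{is} Corollary E4.12 --- it is the conclusion you are trying to reach, not an available input. Moreover the finite-generation argument does not parse: the $X_m$ sit in different ambient spaces $\prod_{i=0}^{m-1}\mathbb{P}^n$, so there is no single noetherian ring in which ``the equations across all $m$'' form an ideal that could be finitely generated. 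The actual content of the Artin--Zhang proof is a generic flatness argument: strong noetherianity gives generic flatness of families of truncated point modules over arbitrary commutative noetherian bases (Artin--Small--Zhang), and a noetherian induction on the base then shows that the truncation maps between the \emph{functors} (not just their $K$-points) are eventually isomorphisms, uniformly in $m$. Without some version of that argument, your proposal establishes eventual surjectivity on stable images for each fixed level but neither injectivity nor a uniform $m_0$, which is exactly where the strong noetherian hypothesis has to do real work.
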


\begin{example}\label{example17.6.11}
It is known (see  Example 3.8 in \cite{Rogalski} and also
\cite{Kanazawa}) that for the mul\-ti-\-pa\-ra\-meter quantum
affine $3$-space, $A:=K_{\textbf{q}}[x_1,x_2,x_3]$, defined by
\begin{center}
$x_2x_1=q_{12}x_1x_2$, $x_3x_1=q_{13}x_1x_3$,
$x_3x_2=q_{23}x_2x_3$,
\end{center}
$m_0=2$ and
\begin{center}
$X_2=\{(p_0,\sigma(p_0))\mid p_0\in E\}$,
\end{center}
where
\begin{equation}\label{equation17.6.10}
E=
\begin{cases}
\mathbb{P}^2 \ \text{if}\ q_{12}q_{23}=q_{13}, & \\
\{(x_0:y_0:z_0)\in \mathbb{P}^2\mid x_0y_0z_0=0\}\ \text{if}\
q_{12}q_{23}\neq q_{13},
\end{cases}
\end{equation}
with $\sigma:E\to E$ bijective. Thus, for $A$ we have the
bijective correspondence $P(A)\longleftrightarrow
X_2\longleftrightarrow E$. We will prove this including all
details omitted in the above cited references. Following
Proposition \ref{17.3.6}, in this case we have $3$ variables,
$r=3$ and every generator of $A$ has degree $2$. We have to show
that the description of $X_2$ is as above and for all $m\geq 2$
the function $\phi_m:X_{m+1}\to X_m$ is bijective.

Recall that
\begin{center}
$X_2=\{(p_0,p_1)\in \mathbb{P}^2\times \mathbb{P}^2\mid
g_l(p_0,p_1)=0\ \text{for}\ 1\leq l\leq 3\}$,
\end{center}
where $p_0:=(x_0:y_0:z_0)$, $p_1:=(x_1:y_1:z_1)$ and $g_1,g_2,g_3$
are the multilinearization of the defining relations of $A$, i.e.,
\begin{center}
$g_1(p_0,p_1)=y_0x_1-q_{12}x_0y_1$,
$g_2(p_0,p_1)=z_0x_1-q_{13}x_0z_1$,
$g_3(p_0,p_1)=z_0y_1-q_{23}y_0z_1$.
\end{center}
Thus, the relations that define $A$ can be written as
\begin{center}
$y_0x_1-q_{12}x_0y_1=0$, $z_0x_1-q_{13}x_0z_1=0$ and
$z_0y_1-q_{23}y_0z_1=0$,
\end{center}
and in a matrix form we have
\begin{equation}\label{equation17.6.1}
\begin{bmatrix} y_0 & -q_{12}x_0 & 0\\
z_0 & 0 & -q_{13}x_0\\
0 & z_0 & -q_{23}y_0
\end{bmatrix}
\begin{bmatrix}
x_1\\
y_1\\
z_1
\end{bmatrix}
=0.
\end{equation}
In order to compute $X_2$, let $E$ be the projection of $X_2$ onto
the first copy of $\mathbb{P}^2$. Let $p_0\in \mathbb{P}^2$, then
$p_0\in E$ if and only if (\ref{equation17.6.1}) has a non trivial
solution $p_1$ if and only if the determinant of the matrix $F$ of
this system is equal $0$, i.e.,
$x_0y_0z_0(q_{12}q_{23}-q_{13})=0$. In addition, observe that
${\rm rank}(F)=2$. In fact, it is clear that ${\rm rank}(F)\neq 0$
(contrary, $x_0=y_0=z_0=0$, false). Suppose that ${\rm
rank}(F)=1$, and assume for example that the $K$-basis of $F$ is
the first column, so $0=z_0=y_0=x_0$, a contradiction; in a
similar way we get a contradiction assuming that the $K$-basis of
$F$ is the second or the third column. Thus, $\dim_K(\ker(F))=1$
and given $p_0=(x_0:y_0:z_0)\in E$ there exists a unique
$p_1=(x_1:y_1:z_1)\in \mathbb{P}^2$ that satisfies
(\ref{equation17.6.1}), and hence, we define a function
\begin{equation}\label{equation17.6.2}
\sigma:E \to \mathbb{P}^2, \  \sigma(p_0):=p_1.
\end{equation}
Therefore, $X_2=\{(p_0,\sigma(p_0))\mid p_0\in E\}$.

%
%

$\sigma$ is injective: For this, note that the defining relations
of $A$ can be written also in the following matrix way:
\begin{center}
$\begin{bmatrix} -q_{12}y_1 & x_1 & 0\\
-q_{13}z_1 & 0 & x_1\\
0 & -q_{23}z_1 & y_1
\end{bmatrix}
\begin{bmatrix}
x_0\\
y_0\\
z_0
\end{bmatrix}
=0$;
\end{center}
let $G$ be the matrix of this system; as above, ${\rm rank}(G)=2$
and $\dim_K(\ker(G))=1$, thus, if $\sigma(p_0)=p_1=\sigma(p_0')$,
then $p_0=p_0'$.

From the determinant of $F$ arise two cases.

\textit{Case 1}: $q_{12}q_{23}=q_{13}$. In this case every point
$p_0=(x_0:y_0:z_0)$ of $\mathbb{P}^2$ is in $E$, i.e.,
$E=\mathbb{P}^2$. Let $\sigma_1:\mathbb{P}^2\to \mathbb{P}^2$ be
the function in (\ref{equation17.6.2}), we have to show that
$\sigma_1$ is surjective. Since in this case $p_0\in E$ if and
only if $p_1\in E$,  we define $\theta_1:\mathbb{P}^2\to
\mathbb{P}^2$ by $\theta_1(p_1):=p_0$ with $Gp_0=0$. Observe that
$\sigma_1\theta_1=i_{\mathbb{P}^2}$: Indeed,
$\sigma_1(\theta_1(p_1))=\sigma_1(p_0)=p_1'$ with $Fp_1'=0$, but
$Gp_0=0$ is equivalent to $Fp_1=0$, and since $\dim_K(\ker(F))=1$,
then $p_1'=p_1$.

\textit{Case 2}: $q_{12}q_{23}\neq q_{13}$. In this case
$p_0=(x_0:y_0:z_0)\in E$ if and only if $x_0y_0z_0=0$, i.e.,
$E=\{(x_0:y_0:z_0)\in \mathbb{P}^2\mid x_0y_0z_0=0\}$. We denote
by $\sigma_2$ the function in (\ref{equation17.6.2}). We know that
$\sigma_2$ is injective; observe that $\sigma_2(E)\subseteq E$:
Indeed, let $p_0\in E$, then $\sigma_2(p_0)=\sigma(p_0)=p_1$, but
since $q_{12}q_{23}\neq q_{13}$ and $\det(G)=0$ (contrary,
$x_0=y_0=z_0=0$, false), then $x_1y_1z_1=0$, i.e., $p_1\in E$.
Only rest to show that $\sigma_2:E\to E$ is surjective. As in the
case 1, we define $\theta_2:E\to E$ by $\theta_2(p_1):=p_0$ with
$Gp_0=0$, then $\sigma_2(\theta_2(p_1))=\sigma_2(p_0)=p_1'$, with
$Fp_1'=0$, but $Gp_0=0$ is equivalent to $Fp_1=0$, and since
$\dim_K(\ker(F))=1$, then $p_1'=p_1$.

Thus, in both cases we have proved that
\begin{center}
$X_2=\{(p_0,\sigma(p_0))\mid p_0\in E\}\longleftrightarrow E$.
\end{center}
To complete the example, we will show that $X\longleftrightarrow
X_2$. According to Proposition \ref{17.3.6}, we have to prove that
for every $m\geq 2$, $\phi_m:X_{m+1}\to X_m$ is a bijection. For
$m=2$, $\phi_2(p_0,p_1,p_2)=(p_0,p_1)\in X_2$, so
$p_1=\sigma(p_0)$, note that $p_2=\sigma^2(p_0)$ since by the
definition of $X$, $g_l(p_1,p_2)=0$ for $1\leq l\leq 3$, so by the
proved above $(\sigma(p_0),\sigma^2(p_0))\in X_2$. Thus,
$X_3=\{(p_0,\sigma(p_0), \sigma^2(p_0))\mid p_0\in E\}$ and
$\phi_2$ is bijective. By induction, assume that
\begin{center}
$X_m=\{(p_0,\sigma(p_0),\dots, \sigma^{m-1}(p_0))\mid p_0\in E\}$
and $X_m\longleftrightarrow X_{m-1}$,
\end{center}
so $X_{m+1}=\{(p_0,\sigma(p_0),\dots, \sigma^{m}(p_0))\mid p_0\in
E\}$ since $(\sigma^{m-1}(p_0),\sigma^{m}(p_0))\in X_2$, and hence
$X_{m+1}\longleftrightarrow X_m$.
\end{example}

\begin{example}\label{remark17.6.11}
We will show next that the ideas and results of Example
\ref{example17.6.11} can be extended to the
mul\-ti-\-pa\-ra\-meter quantum affine $n$-space
$A:=K_{\textbf{q}}[x_1,\dots,x_n]$, with $n\geq 3$ (see also
\cite{Kanazawa}). We will see that $m_0=2$ and $X_2$ is as in
(\ref{equation17.6.4}) below.

Recall that in $A$ we have the defining relations
\begin{center}
$x_jx_i=q_{ij}x_ix_j$, $1\leq i<j\leq n$.
\end{center}
We have $n$ variables, $r=\frac{n(n-1)}{2}$ generators, every
generator of $A$ has degree $2$ and
\begin{center}
$X_2=\{(p_0,p_1)\in \mathbb{P}^{n-1}\times \mathbb{P}^{n-1}\mid
g_l(p_0,p_1)=0\ \text{for}\ 1\leq l\leq r\}$,
\end{center}
where $p_0:=(x_1^0:\cdots :x_n^0)$, $p_1:=(x_1^1:\cdots :x_n^1)$
and the $g_l$ are the multilinearization of the defining relations
of $A$. Thus, the relations that define $A$ can be written as
\begin{center}
$x_j^0x_i^1-q_{ij}x_i^0x_j^1=0$, $1\leq i<j\leq n$,
\end{center}
and in a matrix form we have
\begin{equation*}
\begin{bmatrix} x_2^0 & -q_{12}x_1^0 & 0 & 0 & 0 & \cdots & 0 & 0\\
x_3^0 & 0 & -q_{13}x_1^0 & 0 & 0 & \cdots & 0 & 0 \\
x_4^0 & 0 & 0 & -q_{14}x_1^0 & 0 & \cdots & 0 & 0 \\
\vdots & \vdots & \vdots & \vdots & \vdots &  & \vdots & \vdots \\
x_n^0 & 0 & 0 & 0 & 0 & \cdots & 0 & -q_{1n}x_1^0\\
0 & x_3^0 & -q_{23}x_2^0 & 0 & 0 & \cdots & 0 & 0\\
0 & x_4^0 & 0 & q_{24}x_2^0 & 0 & \cdots & 0 & 0\\
0 & x_5^0 & 0 & 0 & q_{25}x_2^0 & \cdots & 0 & 0\\
\vdots & \vdots & \vdots & \vdots & \vdots &  & \vdots & \vdots \\
0 & x_n^0 & 0 & 0 & 0 & \cdots & 0 & -q_{2n}x_2^0\\
\vdots & \vdots & \vdots & \vdots & \vdots &  & \vdots & \vdots \\
0 & 0 & 0 & 0 & 0 & \cdots & x_n^0 & -q_{n-1,n}x_{n-1}^0\\
\end{bmatrix}
\begin{bmatrix}
x_{1}^1\\
x_2^1\\
\vdots\\
\\
\vdots \\
\\
\vdots\\
x_{n-1}^1\\
x_{n}^1
\end{bmatrix}
=0.
\end{equation*}
The size of the matrix $F$ of this system is
$\frac{n(n-1)}{2}\times n$. In order to compute $X_2$, let $E$ be
the projection of $X_2$ onto the first copy of $\mathbb{P}^{n-1}$.
Let $p_0\in \mathbb{P}^{n-1}$, then $p_0\in E$ if and only if the
previous system has a non trivial solution $p_1$. This last
condition is equivalent to ${\rm rank}(F)=n-1$: Indeed, it is
clear that ${\rm rank}(F)\leq n$, but ${\rm rank}(F)\neq n$
(contrary, ${\rm dim}_K(\ker(F))=0$ and hence $x_1^1=\cdots
x_n^1=0$, false); thus, ${\rm rank}(F)\leq n-1$, but there exists
$i\in \{1,\dots,n\}$ such that $x_i^0\neq 0$ and the following
$n-1$ rows of $F$ are linearly independent (actually, this is true
for every $x_i^0\neq 0$):
\begin{center}
$\begin{bmatrix} \cdots x_i^0 & \cdots & -q_{li}x_i^0 & \cdots
\end{bmatrix}$ and $\begin{bmatrix} \cdots & \cdots &  x_k^0 & \cdots & -q_{ik}x_i^0 & \cdots
\end{bmatrix}$
\end{center}
for all $1\leq l<i<k\leq n$, where $x_i^0$ is in the $l$-position,
$-q_{li}x_i^0$ is in the $i$-position, $x_k^0$ is in the
$i$-position and $-q_{ik}x_i^0$ is in the $k$-position. Hence,
${\rm rank}(F)=n-1$.  Conversely, if ${\rm rank}(F)=n-1$, then the
system has non trivial solution.

Thus, $p_0\in E$ if and only if ${\rm rank}(F)=n-1$ if and only if
${\rm dim}_K(\ker(F))=1$, and hence, given $p_0\in E$ there exists
a unique $p_1\in \mathbb{P}^{n-1}$ that satisfies the above matrix
system, so we define a function
\begin{equation*}
\sigma:E \to \mathbb{P}^{n-1}, \  \sigma(p_0):=p_1.
\end{equation*}
Therefore, $X_2=\{(p_0,\sigma(p_0))\mid p_0\in E\}$. Rewriting the
defining relations of $A$ as we did in Example
\ref{example17.6.11}, we conclude that $\sigma$ is injective.
Moreover, $Im(\sigma)=E$: In fact, recall that ${\rm rank}(F)=n-1$
if and only if every minor of $F$ of size $n$ is equal $0$; let
$\mathcal{F}$ be the set of minors of size $n$ of the matrix $F$,
then $E$ is the projective variety
\begin{equation}\label{equation17.6.4}
E=\bigcap_{f\in \mathcal{F}}V(f)=V(I_F),
\end{equation}
where $I_F$ is the ideal of
$K[x_1^0,\dots,x_n^0]=K[x_1,\dots,x_n]$ generated by all $f\in
\mathcal{F}$. Hence, from the rewriting of relations we conclude
also that $p_1=\sigma(p_0)\in E$ and, as in Example
\ref{example17.6.11}, we define a function $\theta:E\to E$ such
that $\sigma\theta=i_E$.


Finally, $X_2=\{(p_0,\sigma(p_0))\mid p_0\in
E\}\longleftrightarrow E$ and the proof of $X\longleftrightarrow
X_2$ is exactly as in the final part of Example
\ref{example17.6.11}.
\end{example}

\begin{remark}
(i) For $n=3$,
$\mathcal{F}=\{x_1^0x_2^0x_3^0(q_{12}q_{23}-q_{13})\}$ and hence
(\ref{equation17.6.4}) extends (\ref{equation17.6.10}). For $n=2$,
$A=K_q[x_1,x_2]$ is the quantum plane and from Example
\ref{exampleA.3.4} we know that
$P(A)\longleftrightarrow\mathbb{P}^1$, but observe that
(\ref{equation17.6.4}) also cover this case since
$\mathcal{F}=\emptyset$ and hence $E=V(0)=\mathbb{P}^1$.

(ii) For the affine algebra $A=K[x_1,\dots,x_n]$ all constants
$q_{ij}$ are trivial, i.e, $q_{ij}=1$ and the generators of $I_F$
in (\ref{equation17.6.4}) are null, so $I_F=0$ and
$E=\mathbb{P}^{n-1}$. This result agrees with Example
\ref{17.3.2}.

(iii) It is important to recall that there exist quotients of
finitely graded algebras $A$ for which $P(A)=\emptyset$. In fact,
some quotient algebras of the  mul\-ti-\-pa\-ra\-meter quantum
affine $n$-space have empty set of point modules, see
\cite{Vancliff1} and \cite{Vancliff2}.
\end{remark}

\subsection{Finitely semi-graded rings}

In this preliminary subsection we recall the definition of
semi-graded rings and modules introduced firstly in
\cite{lezamalatorre}. Let $B$ be a ring. We say that $B$ is
\textit{semi-graded} $(SG)$ if there exists a collection
$\{B_n\}_{n\geq 0}$ of subgroups $B_n$ of the additive group $B^+$
such that the following conditions hold:
\begin{enumerate}
\item[\rm (i)]$B=\bigoplus_{n\geq 0}B_n$.
\item[\rm (ii)]For every $m,n\geq 0$, $B_mB_n\subseteq B_0\oplus \cdots \oplus B_{m+n}$.
\item[\rm (iii)]$1\in B_0$.
\end{enumerate}
The collection $\{B_n\}_{n\geq 0}$ is called a
\textit{semi-graduation} of $B$ and we say that the elements of
$B_n$ are \textit{homogeneous} of degree $n$. Let $B$ and $C$ be
semi-graded rings and let $f: B\to C$ be a ring homomorphism, we
say that $f$ is \textit{homogeneous} if $f(B_n)\subseteq C_{n}$
for every $n\geq 0$. Let $B$ be a $SG$ ring and let $M$ be a
$B$-module. We say that $M$ is a $\mathbb{Z}$-semi-graded, or
simply semi-graded, if there exists a collection $\{M_n\}_{n\in
\mathbb{Z}}$ of subgroups $M_n$ of the additive group $M^+$ such
that the following conditions hold:
\begin{enumerate}
\item[\rm (a)]$M=\bigoplus_{n\in \mathbb{Z}} M_n$.
\item[\rm (b)]For every $m\geq 0$ and $n\in \mathbb{Z}$, $B_mM_n\subseteq \bigoplus_{k\leq m+n}M_k$.
\end{enumerate}
The collection $\{M_n\}_{n\in \mathbb{Z}}$ is called a
semi-graduation of $M$ and we say that the elements of $M_n$ are
homogeneous of degree $n$. We say that $M$ is positively
semi-graded, also called $\mathbb{N}$-semi-graded, if $M_n=0$ for
every $n<0$. Let $f: M\to N$ be an homomorphism of $B$-modules,
where $M$ and $N$ are semi-graded $B$-modules; we say that $f$ is
homogeneous if $f(M_n)\subseteq N_n$ for every $n\in \mathbb{Z}$.

An important class of semi-graded rings that includes finitely
graded algebras is the following (see \cite{lezamalatorre}). Let
$B$ be a ring. We say that $B$ is \textit{finitely semi-graded}
$(FSG)$ if $B$ satisfies the following conditions:
\begin{enumerate}
\item[\rm (1)]$B$ is $SG$.
\item[\rm (2)]There exists finitely many elements $x_1,\dots,x_n\in B$ such that the
subring generated by $B_0$ and $x_1,\dots,x_n$ coincides with $B$.
\item[\rm (3)]For every $n\geq 0$, $B_n$ is a free $B_0$-module of finite dimension.
\end{enumerate}
Moreover, if $M$ is a $B$-module, we say that $M$ is finitely
semi-graded if $M$ is semi-graded, finitely generated, and for
every $n\in \mathbb{Z}$, $M_n$ is a free $B_0$-module of finite
dimension.

\begin{remark}
(i) It is clear that any $\mathbb{N}$-graded ring is $SG$.

(ii) Any finitely graded algebra is a $FSG$ ring.
\end{remark}

From the definitions above we get the following elementary facts.

\begin{proposition}[\cite{lezamalatorre}]\label{proposition17.5.5}
Let $B=\bigoplus_{n\geq 0}B_n$ be a $SG$ ring. Then,
\begin{enumerate}
\item[\rm (i)]$B_0$ is a subring of $B$. Moreover, for any $n\geq 0$, $B_0\oplus \cdots \oplus B_{n}$ is a $B_0-B_0$-bimodule, as well as $B$.
\item[\rm (ii)]$B$ has a standard $\mathbb{N}$-filtration given by
\begin{equation}\label{equ17.5.1}
F_n(B):=B_0\oplus \cdots \oplus B_{n}.
\end{equation}
\item[\rm (iii)]The associated graded ring $Gr(B)$ satisfies
\begin{center}
$Gr(B)_n\cong B_n$, for every $n\geq 0$ $($isomorphism of abelian
groups$)$.
\end{center}
\item[\rm (iv)]Let $M=\bigoplus_{n\in \mathbb{Z}}M_n$ be a semi-graded $B$-module and $N$ a submodule of $M$. The following conditions are equivalent:
\begin{enumerate}
\item[\rm (a)]$N$ is semi-graded.
\item[\rm (b)]For every $z\in N$, the homogeneous components of $z$ are in $N$.
\item[\rm (c)]$M/N$ is semi-graded with semi-graduation given by
\begin{center}
$(M/N)_n:=(M_n+N)/N$, $n\in \mathbb{Z}$.
\end{center}
\end{enumerate}
\end{enumerate}
\end{proposition}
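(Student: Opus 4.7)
The plan is to dispose of parts (i)--(iii) by direct bookkeeping with the three $SG$ axioms, and then concentrate on the real content of the statement, which is the three-way equivalence in (iv).

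For (i), I would put $m=n=0$ in axiom (ii) of the definition of $SG$ to get $B_0B_0\subseteq B_0$, and combine this with $1\in B_0$ (axiom (iii)) to conclude that $B_0$ is a subring. For the bimodule statement, the same axiom gives $B_0\cdot B_k\subseteq B_0\oplus\cdots\oplus B_k$ and $B_k\cdot B_0\subseteq B_0\oplus\cdots\oplus B_k$ for every $k\leq n$, so $B_0\cdot F_n(B)\subseteq F_n(B)$ and $F_n(B)\cdot B_0\subseteq F_n(B)$; passing to the union $B=\bigcup_n F_n(B)$ gives the statement for $B$. For (ii), the filtration properties ($F_n(B)\subseteq F_{n+1}(B)$, $\bigcup_n F_n(B)=B$, $1\in F_0(B)$, and $F_m(B)\cdot F_n(B)\subseteq F_{m+n}(B)$) all follow immediately from the direct sum decomposition of $B$ together with axiom (ii) applied termwise. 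For (iii), the identification $F_n(B)=F_{n-1}(B)\oplus B_n$ as abelian groups, built into the direct sum, gives $Gr(B)_n:=F_n(B)/F_{n-1}(B)\cong B_n$ via the obvious projection.

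The heart of the argument is (iv). I would first prove (a) $\Leftrightarrow$ (b). For (a) $\Rightarrow$ (b): given $N=\bigoplus_n N_n$ with $N_n\subseteq M_n$, each $z\in N$ decomposes as $z=\sum_n z_n$ with $z_n\in N_n\subseteq M_n$, and by uniqueness of the $M$-homogeneous decomposition these $z_n$ are exactly the $M$-homogeneous components of $z$, hence they lie in $N$. For the converse, I would set $N_n:=N\cap M_n$ and use (b) to write $N=\sum_n N_n$, directness being inherited from $M=\bigoplus_n M_n$. Next (a) $\Rightarrow$ (c): define $(M/N)_n:=(M_n+N)/N$, observe $M/N=\sum_n(M/N)_n$ from $M=\sum_n M_n$, and verify directness. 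The key point is: if $\sum_n(m_n+N)=0$ with $m_n\in M_n$, then $\sum_n m_n\in N$, so (b) forces each $m_n\in N$, whence $m_n+N=0$. The semi-graded module axiom $B_mM_n\subseteq\bigoplus_{k\leq m+n}M_k$ descends to the quotient by taking images. Finally, (c) $\Rightarrow$ (b): given $z\in N$ with $M$-decomposition $z=\sum_n z_n$, reducing modulo $N$ gives $0=\sum_n(z_n+N)$ in the direct sum $\bigoplus_n(M/N)_n$; directness forces each $z_n+N=0$, i.e.\ $z_n\in N$.

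The only place I expect a subtlety is the verification of directness of the proposed semi-graduation on $M/N$ in (a) $\Rightarrow$ (c); this is exactly the step where condition (b) must be invoked nontrivially, and it is the reason the three conditions are best stated jointly as an equivalence rather than as a one-way chain. Everything else is a routine unwinding of the direct sum and the multiplication axiom.
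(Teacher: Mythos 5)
Your argument is correct and is the standard one; the paper itself states this proposition without proof (it is imported from the cited reference \cite{lezamalatorre}), so there is no in-paper argument to diverge from. The only point worth making explicit is in (a) $\Rightarrow$ (b): you tacitly take ``$N$ is semi-graded'' to mean that $N$ carries a semi-graduation with $N_n\subseteq M_n$ (the induced one), which is indeed the intended reading --- without that compatibility the implication would fail --- and in (b) $\Rightarrow$ (a) you should also record the one-line check that $B_mN_n\subseteq\bigoplus_{k\leq m+n}N_k$ follows from $B_mM_n\subseteq\bigoplus_{k\leq m+n}M_k$ together with (b).
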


\begin{remark}\label{remark18.1.7}
(i) If $B$ is a $FSG$ ring, then for every $n\geq 0$,
$Gr(B)_n\cong B_n$ as $B_0$-modules.

(ii) Observe if $B$ is $FSG$ ring, then $B_0B_p=B_p$ for every
$p\geq 0$, and if $M$ is finitely semi-graded, then $B_0M_n=M_n$
for all $n\in \mathbb{Z}$.
\end{remark}

\subsection{Skew $PBW$ extensions}

In order to present enough examples of $FSG$ rings not being
necessarily finitely graded algebras, we recall in this subsection
the notion of skew $PBW$ extension defined firstly in
\cite{LezamaGallego}.

\begin{definition}[\cite{LezamaGallego}]\label{gpbwextension}
Let $R$ and $A$ be rings. We say that $A$ is a \textit{skew $PBW$
extension of $R$} $($also called a $\sigma-PBW$ extension of
$R$$)$, if the following conditions hold:
\begin{enumerate}
\item[\rm (i)]$R\subseteq A$.
\item[\rm (ii)]There exist finitely many elements $x_1,\dots ,x_n\in A$ such $A$ is a left $R$-free module with basis
\begin{center}
${\rm Mon}(A):= \{x^{\alpha}=x_1^{\alpha_1}\cdots
x_n^{\alpha_n}\mid \alpha=(\alpha_1,\dots ,\alpha_n)\in
\mathbb{N}^n\}$, with $\mathbb{N}:=\{0,1,2,\dots\}$.
\end{center}
The set $Mon(A)$ is called the set of standard monomials of $A$.
\item[\rm (iii)]For every $1\leq i\leq n$ and $r\in R-\{0\}$, there exists $c_{i,r}\in R-\{0\}$ such that
\begin{equation}\label{sigmadefinicion1}
x_ir-c_{i,r}x_i\in R.
\end{equation}
\item[\rm (iv)]For every $1\leq i,j\leq n$, there exists $c_{i,j}\in R-\{0\}$ such that
\begin{equation}\label{sigmadefinicion2}
x_jx_i-c_{i,j}x_ix_j\in R+Rx_1+\cdots +Rx_n.
\end{equation}
Under these conditions we will write $A:=\sigma(R)\langle
x_1,\dots ,x_n\rangle$.
\end{enumerate}
\end{definition}
Associated to a skew $PBW$ extension $A=\sigma(R)\langle x_1,\dots
,x_n\rangle$, there are $n$ injective endomorphisms
$\sigma_1,\dots,\sigma_n$ of $R$ and $\sigma_i$-derivations
$\delta_i$, as the following proposition shows.
\begin{proposition}[\cite{LezamaGallego}]\label{sigmadefinition}
Let $A$ be a skew $PBW$ extension of $R$. Then, for every $1\leq
i\leq n$, there exists an injective ring endomorphism
$\sigma_i:R\rightarrow R$ and a $\sigma_i$-derivation
$\delta_i:R\rightarrow R$ such that
\begin{center}
$x_ir=\sigma_i(r)x_i+\delta_i(r)$,
\end{center}
for each $r\in R$.
\end{proposition}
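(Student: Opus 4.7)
The plan is to extract the maps $\sigma_i$ and $\delta_i$ directly from the unique decomposition afforded by the fact that $\mathrm{Mon}(A)$ is a left $R$-basis of $A$, and then verify the required algebraic identities by comparing coefficients.

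First, I would use Definition \ref{gpbwextension}(iii) to observe that, for each $1\le i\le n$ and each $r\in R\setminus\{0\}$, the element $x_i r$ lies in $Rx_i\oplus R$ (since $x_ir - c_{i,r}x_i\in R$). Combined with Definition \ref{gpbwextension}(ii), which says the standard monomials form a left $R$-basis, this means the elements $c_{i,r}$ and $x_ir-c_{i,r}x_i$ are \emph{uniquely determined} by $r$. I would then define, for each $i$, the set maps $\sigma_i,\delta_i\colon R\to R$ by
\begin{equation*}
\sigma_i(r):=c_{i,r},\qquad \delta_i(r):=x_ir-\sigma_i(r)x_i,
\end{equation*}
extending by $\sigma_i(0)=0=\delta_i(0)$, so that the identity $x_ir=\sigma_i(r)x_i+\delta_i(r)$ holds for every $r\in R$.

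Next I would verify that $\sigma_i$ is a ring endomorphism and $\delta_i$ a $\sigma_i$-derivation purely by comparison of coefficients in $Rx_i\oplus R$. For additivity: expand $x_i(r+s)$ in two ways and invoke uniqueness of the $1$- and $x_i$-components to get $\sigma_i(r+s)=\sigma_i(r)+\sigma_i(s)$ and $\delta_i(r+s)=\delta_i(r)+\delta_i(s)$. For multiplicativity, compute
\begin{equation*}
x_i(rs)=(x_ir)s=\sigma_i(r)(x_is)+\delta_i(r)s=\sigma_i(r)\sigma_i(s)x_i+\bigl(\sigma_i(r)\delta_i(s)+\delta_i(r)s\bigr),
\end{equation*}
and match this against $\sigma_i(rs)x_i+\delta_i(rs)$; uniqueness yields simultaneously the ring-homomorphism identity $\sigma_i(rs)=\sigma_i(r)\sigma_i(s)$ and the twisted Leibniz rule $\delta_i(rs)=\sigma_i(r)\delta_i(s)+\delta_i(r)s$. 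The equation $x_i\cdot 1=x_i$ forces $\sigma_i(1)=1$ and $\delta_i(1)=0$.

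Finally, for injectivity of $\sigma_i$, I would appeal directly to Definition \ref{gpbwextension}(iii): whenever $r\neq 0$, the element $c_{i,r}$ is required to be in $R\setminus\{0\}$, so $\sigma_i(r)\neq 0$; hence $\ker\sigma_i=0$. The only subtlety, and the step to be careful about, is making sure the uniqueness argument is sound: it relies on $\{1,x_1,\dots,x_n\}$ being part of the $R$-basis $\mathrm{Mon}(A)$, which is exactly what Definition \ref{gpbwextension}(ii) provides. No real obstacle beyond bookkeeping arises, since condition (iii) already delivers the needed decomposition; the content of the proposition is essentially the translation of that decomposition into the language of endomorphisms and twisted derivations.
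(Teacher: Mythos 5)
Your proof is correct and is essentially the argument of the cited reference \cite{LezamaGallego} (the paper itself only quotes the result): uniqueness of coefficients with respect to the left $R$-basis ${\rm Mon}(A)$ gives well-defined maps $\sigma_i(r):=c_{i,r}$ and $\delta_i(r):=x_ir-c_{i,r}x_i$, and comparing coefficients in $x_i(rs)=(x_ir)s$ yields multiplicativity and the twisted Leibniz rule, with injectivity coming from $c_{i,r}\in R-\{0\}$ for $r\neq 0$. No gaps.
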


Some particular cases of skew $PBW$ extensions are the following.

\begin{definition}[\cite{LezamaGallego}]\label{sigmapbwderivationtype}
Let $A$ be a skew $PBW$ extension.
\begin{enumerate}
\item[\rm (a)]
$A$ is quasi-commutative if the conditions {\rm(}iii{\rm)} and
{\rm(}iv{\rm)} in Definition \ref{gpbwextension} are replaced by
\begin{enumerate}
\item[\rm (iii')]For every $1\leq i\leq n$ and $r\in R-\{0\}$ there exists $c_{i,r}\in R-\{0\}$ such that
\begin{equation}
x_ir=c_{i,r}x_i.
\end{equation}
\item[\rm (iv')]For every $1\leq i,j\leq n$ there exists $c_{i,j}\in R-\{0\}$ such that
\begin{equation}
x_jx_i=c_{i,j}x_ix_j.
\end{equation}
\end{enumerate}
\item[\rm (b)]$A$ is bijective if $\sigma_i$ is bijective for
every $1\leq i\leq n$ and $c_{i,j}$ is invertible for any $1\leq
i<j\leq n$.
\end{enumerate}
\end{definition}

\begin{definition}\label{1.1.6}
Let $A$ be a skew $PBW$ extension of $R$ with endomorphisms
$\sigma_i$, $1\leq i\leq n$, as in Proposition
\ref{sigmadefinition}.
\begin{enumerate}
\item[\rm (i)]For $\alpha=(\alpha_1,\dots,\alpha_n)\in \mathbb{N}^n$,
$\sigma^{\alpha}:=\sigma_1^{\alpha_1}\cdots \sigma_n^{\alpha_n}$,
$|\alpha|:=\alpha_1+\cdots+\alpha_n$. If
$\beta=(\beta_1,\dots,\beta_n)\in \mathbb{N}^n$, then
$\alpha+\beta:=(\alpha_1+\beta_1,\dots,\alpha_n+\beta_n)$.
\item[\rm (ii)]For $X=x^{\alpha}\in Mon(A)$,
$\exp(X):=\alpha$ and $\deg(X):=|\alpha|$.
\item[\rm (iii)]Let $0\neq f\in A$, $t(f)$ is the finite
set of terms that conform $f$, i.e., if $f=c_1X_1+\cdots +c_tX_t$,
with $X_i\in Mon(A)$ and $c_i\in R-\{0\}$, then
$t(f):=\{c_1X_1,\dots,c_tX_t\}$.
\item[\rm (iv)]Let $f$ be as in {\rm(iii)}, then $\deg(f):=\max\{\deg(X_i)\}_{i=1}^t.$
\end{enumerate}
\end{definition}

The next theorems establish some results for skew $ PBW $
extensions that we will use later, for their proofs see
\cite{Oswaldo}, \cite{lezamalatorre} and \cite{LezamaHelbert}.

\begin{theorem}[\cite{Oswaldo}]\label{filteredskew}
Let $ A $ be an arbitrary skew $PBW$ extension of the ring $ R $.
Then, $ A $ is a filtered ring with filtration given by
$$F_{m}:=
\begin{cases}
R, & \text{ if } m=0\\
\{f \in A| deg(f) \leq m\}, & \text{  if } m \geq 1
\end{cases}$$
and the corresponding graded ring $ Gr(A) $ is a quasi-commutative
skew $ PBW $ extension of $ R $. Moreover, if $ A $ is bijective,
then $ Gr(A) $ is quasi-commutative bijective skew $ PBW $
extension of $ R $.
\end{theorem}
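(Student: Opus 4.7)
The plan is to establish the filtration property first and then read off $Gr(A)$ explicitly from the defining relations. The conditions $F_m \subseteq F_{m+1}$, $A = \bigcup_{m\ge 0} F_m$, and $1 \in F_0$ are immediate from Definition \ref{1.1.6}; the only substantive step is the multiplicative property $F_m \cdot F_n \subseteq F_{m+n}$.

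For that inclusion, by $R$-bilinearity and the standard-monomial basis from (ii) of Definition \ref{gpbwextension}, it suffices to show that for $r \in R$ and monomials $x^{\alpha}, x^{\beta} \in Mon(A)$ with $|\alpha| = m$, $|\beta| = n$, the product $x^{\alpha} \cdot r \cdot x^{\beta}$ lies in $F_{m+n}$. I would carry this out in two stages. First, iterated use of $x_i r = \sigma_i(r) x_i + \delta_i(r)$ from Proposition \ref{sigmadefinition} shows by induction on $|\alpha|$ that $x^{\alpha} r = \sigma^{\alpha}(r)\, x^{\alpha} + g$, where $\deg(g) < |\alpha|$. Second, the commutation $x_j x_i - c_{i,j} x_i x_j \in R + Rx_1 + \cdots + Rx_n$ from (iv) of Definition \ref{gpbwextension} allows one to swap adjacent variables at the cost of a correction of degree at most $1$; an induction on the number of inversions of the concatenated index sequence then rewrites $x^{\alpha} x^{\beta}$ as $c\, x^{\alpha + \beta} + h$ with $c \in R$ and $\deg(h) < m+n$. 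Combining the two stages yields $x^{\alpha}\, r\, x^{\beta} \in F_{m+n}$, and hence $A$ is filtered.

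Once the filtration is established, $\{x^{\alpha} + F_{m-1} : |\alpha| = m\}$ is a left $R$-basis of $F_m/F_{m-1}$, so setting $\overline{x_i} := x_i + F_0 \in F_1/F_0$, the standard monomials in the $\overline{x_i}$ form a free left $R$-basis of $Gr(A)$. To read off the relations, note that $x_i r - \sigma_i(r) x_i = \delta_i(r) \in R = F_0$ has strictly lower filtration than the degree-one piece, so in $F_1/F_0$ we obtain $\overline{x_i}\, r = \sigma_i(r)\, \overline{x_i}$, which is axiom (iii') with $c_{i,r} = \sigma_i(r)$. Likewise $x_j x_i - c_{i,j} x_i x_j \in F_1$ vanishes modulo $F_1$ inside $F_2/F_1$, giving $\overline{x_j}\, \overline{x_i} = c_{i,j}\, \overline{x_i}\, \overline{x_j}$, which is axiom (iv'). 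Therefore $Gr(A)$ is a quasi-commutative skew $PBW$ extension of $R$. For the bijective case, the endomorphisms and parameters governing $Gr(A)$ are literally the $\sigma_i$ and $c_{i,j}$ of $A$, so bijectivity of each $\sigma_i$ and invertibility of each $c_{i,j}$ transfer without change.

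The main obstacle is the inductive degree bookkeeping in the second stage: each swap $x_j x_i \mapsto c_{i,j} x_i x_j + (\text{length-one correction})$ can introduce several terms that, once multiplied on either side by remaining variables, must themselves be reduced again. Organizing the induction on the pair (total length, number of inversions) in the lexicographic order, and invoking the first stage to absorb any interleaved scalar $r \in R$, is what makes the rewriting terminate inside $F_{m+n}$ cleanly.
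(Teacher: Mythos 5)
Your argument is correct, and it is essentially the standard proof: the paper itself gives no proof of Theorem \ref{filteredskew} (it only cites \cite{Oswaldo}), and the proof there proceeds exactly as you do, via the two auxiliary identities $x^{\alpha}r=\sigma^{\alpha}(r)x^{\alpha}+(\text{lower degree})$ and $x^{\alpha}x^{\beta}=c_{\alpha,\beta}x^{\alpha+\beta}+(\text{lower degree})$, followed by reading off the relations (iii$'$) and (iv$'$) in $Gr(A)$. The only point to make fully explicit is the one you already flag: the two stages must be run as a single simultaneous induction on total degree, since bounding $\deg(g'x_n)$ and the degrees of the swap corrections already presupposes $F_mF_n\subseteq F_{m+n}$ in lower degrees.
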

\begin{theorem}[\cite{Oswaldo}]\label{skewore}
Let $ A $ be a quasi-commutative skew $ PBW $ extension of a ring
$ R $. Then,
\begin{enumerate}
\item $ A $ is isomorphic to an iterated skew polynomial ring of endomorphism type, i.e.,
$$ A \cong R[z_{1};\theta_{1}]\dots[z_{n};\theta_{n}].$$
\item If $ A $ is bijective, then each endomorphism $ \theta_{i} $ is bijective, $ 1\leq i\leq n $.
\end{enumerate}
\end{theorem}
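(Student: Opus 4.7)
The plan is to build the right iterated skew polynomial ring from the commutation data of $A$ and then identify it with $A$ via the obvious map $z_i \mapsto x_i$. The starting observation is that quasi-commutativity forces the derivations of Proposition \ref{sigmadefinition} to vanish: comparing $x_i r = \sigma_i(r) x_i + \delta_i(r)$ with condition (iii'), $x_i r = c_{i,r} x_i$, and using that $\{1, x_i\}$ lies in the left $R$-basis ${\rm Mon}(A)$, we read off $\delta_i \equiv 0$ and $c_{i,r} = \sigma_i(r)$. The same basis argument applied to the two identities $x_j x_i = c_{i,j} x_i x_j$ and $x_i x_j = c_{j,i} x_j x_i$ yields $c_{i,j} c_{j,i} = 1$, so every $c_{i,j}$ with $i\neq j$ is already a unit in $R$.

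Set $R_0 := R$, and inductively define $R_i := R_{i-1}[z_i; \theta_i]$ for a candidate endomorphism $\theta_i : R_{i-1}\to R_{i-1}$ specified on generators by $\theta_i|_R := \sigma_i$ and $\theta_i(z_j) := c_{j,i}\, z_j$ for $j<i$. To promote this to an actual ring endomorphism, I would extend $\theta_i$ additively using the standard left $R$-basis $\{z_1^{\alpha_1}\cdots z_{i-1}^{\alpha_{i-1}}\}$ of $R_{i-1}$ and verify that it preserves the relations $z_k s = \sigma_k(s) z_k$ for $s\in R,\ k<i$, and $z_k z_j = c_{j,k} z_j z_k$ for $j<k<i$. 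Each compatibility reduces to an identity among the $\sigma$'s and $c$'s, which one obtains by expanding the corresponding mixed product in $A$ (say $x_i x_k s$ or $x_i x_k x_j$) in two different ways and comparing coefficients against ${\rm Mon}(A)$.

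With $T := R_n$ in hand, define $\phi : T \to A$ by $\phi|_R = {\rm id}_R$ and $\phi(z_i) := x_i$. Well-definedness is precisely the verification that the $x_i$ satisfy the defining relations of $T$ inside $A$, which is the content of the first paragraph. Surjectivity is immediate from condition (ii) of Definition \ref{gpbwextension} (after noting that the vanishing of the $\delta_i$ means no lower-order corrections are needed). Injectivity follows from comparing bases: $T$ has $\{z_1^{\alpha_1}\cdots z_n^{\alpha_n}\}$ as a left $R$-basis, $A$ has ${\rm Mon}(A)$ as a left $R$-basis, and $\phi$ sends the former bijectively onto the latter. For part (2), with $\sigma_i$ bijective and the $c_{i,j}$ invertible, I would evaluate $\theta_i$ on a standard-form element $f = \sum r_\alpha z^\alpha$; pushing the $c_{j,i}$'s past the $z_j$'s via the relations of $R_{i-1}$ shows $\theta_i(f) = \sum \sigma_i(r_\alpha) u_\alpha\, z^\alpha$ with each $u_\alpha\in R$ a unit, so solving $\sigma_i(r_\alpha) u_\alpha = s_\alpha$ by $r_\alpha = \sigma_i^{-1}(s_\alpha u_\alpha^{-1})$ exhibits a two-sided inverse.

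The principal obstacle is the well-definedness bookkeeping for the $\theta_i$'s: each compatibility check is elementary, but there are several cases (action on $R$, on a single $z_j$, on products $z_k z_j$, and multiplicativity on monomials), and one must translate identities from $A$ back to $R_{i-1}$ while keeping the indices on the constants $c_{\cdot,\cdot}$ straight. Once this is in place, the remaining arguments (definition of $\phi$, comparison of bases, construction of $\theta_i^{-1}$) are essentially formal.
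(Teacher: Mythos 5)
The paper itself gives no proof of this theorem (it is quoted from \cite{Oswaldo}), but your plan reproduces the standard argument from that reference: use the left $R$-basis ${\rm Mon}(A)$ to force $\delta_i=0$ and $c_{i,r}=\sigma_i(r)$, build the iterated Ore extension with $\theta_i|_R=\sigma_i$ and $\theta_i(z_j)=c_{j,i}z_j$ (the compatibility identities coming from expanding $x_ix_ks$ and $x_ix_kx_j$ in two ways), and conclude by matching the two left $R$-bases; the treatment of part (2) via the units $u_\alpha$ is also correct. One minor caveat: from $x_ix_j=c_{j,i}x_jx_i=c_{j,i}c_{i,j}x_ix_j$ and freeness you only obtain $c_{j,i}c_{i,j}=1$, i.e.\ one-sided invertibility in a general ring, so the parenthetical claim that every $c_{i,j}$ is ``already a unit'' is unjustified without a Dedekind-finiteness hypothesis --- but this is harmless, since part (1) never uses it and in part (2) invertibility of the $c_{i,j}$ is part of the bijectivity hypothesis.
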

\begin{theorem}[Hilbert Basis Theorem, \cite{Oswaldo}] Let $ A $ be a bijective skew $ PBW $ extension of $ R $. If $ R $ is a left (right)
Noetherian ring then $ A $ is also a left (right) Noetherian ring.
\end{theorem}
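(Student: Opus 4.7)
The plan is to reduce to the quasi-commutative case via the associated graded ring, and then to apply the classical Hilbert Basis Theorem for skew polynomial rings iteratively. I will treat only the left-Noetherian statement; the right-Noetherian case is completely symmetric.

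First, I would invoke Theorem \ref{filteredskew}: the filtration $\{F_m\}_{m\geq 0}$ of $A$ by total degree satisfies $F_0=R$, $F_mF_n\subseteq F_{m+n}$, and $\bigcup_m F_m=A$. Because $A$ is left $R$-free on $\mathrm{Mon}(A)$, with $F_m$ spanned by the finitely many standard monomials of degree $\leq m$, each $F_m$ is a finitely generated left $R$-module, so this is a good filtration. Moreover, since $A$ is bijective, Theorem \ref{filteredskew} guarantees that $Gr(A)$ is a \emph{bijective quasi-commutative} skew $PBW$ extension of $R$.

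Second, I would apply Theorem \ref{skewore}(2) to obtain an isomorphism
\begin{equation*}
Gr(A)\;\cong\; R[z_{1};\theta_{1}][z_{2};\theta_{2}]\cdots[z_{n};\theta_{n}],
\end{equation*}
as an iterated Ore extension of endomorphism type, in which each $\theta_i$ is an automorphism of the preceding ring. Then I would invoke the classical Hilbert Basis Theorem for skew polynomial rings: if $S$ is left Noetherian and $\theta$ is a ring automorphism of $S$, then $S[z;\theta]$ is left Noetherian. Starting from the left-Noetherian ring $R$ and iterating $n$ times, I conclude that $Gr(A)$ is left Noetherian.

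Finally, I would invoke the standard filtered-to-graded transfer of noetherianness: for a ring $A$ equipped with a positive, exhaustive, separated filtration of finite type such that $Gr(A)$ is left Noetherian, $A$ itself is left Noetherian. The usual argument applies: given an ascending chain $I_1\subseteq I_2\subseteq\cdots$ of left ideals of $A$, pass to the chain of graded left ideals $\mathrm{gr}(I_1)\subseteq\mathrm{gr}(I_2)\subseteq\cdots$ in $Gr(A)$, which stabilizes; a routine lifting argument on principal symbols then forces the original chain to stabilize.

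The only point deserving care is the last step, namely verifying the hypotheses needed for the filtered-to-graded transfer. Separation of the filtration ($F_{-1}=0$ in the extended sense) and exhaustiveness are immediate from the definition; the finite-type condition (each $F_m$ finitely generated over $R$) is exactly what the left $R$-freeness on $\mathrm{Mon}(A)$ in Definition \ref{gpbwextension}(ii) provides. Once these are in hand, the transfer is essentially bookkeeping, so I do not expect a genuine obstacle beyond citing the classical result.
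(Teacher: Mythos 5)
Your proposal is correct and follows essentially the same route as the cited source: the paper itself gives no proof of this theorem (it simply refers to \cite{Oswaldo}), and the argument there is precisely the one you outline --- pass to the associated graded ring via Theorem \ref{filteredskew}, identify $Gr(A)$ as an iterated skew polynomial ring with bijective endomorphisms via Theorem \ref{skewore}, apply the classical skew Hilbert Basis Theorem, and transfer Noetherianity back through the positive exhaustive filtration. Your remark that bijectivity is what licenses the skew-polynomial step, and that $R$-freeness on ${\rm Mon}(A)$ gives the finiteness of each $F_m$, is exactly the right point of care.
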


\begin{theorem}[\cite{LezamaHelbert}]\label{skewstrongly}
Let $K$ be a field and let $A = \sigma(R)\langle
x_{1},\dots,x_{n}\rangle$ be a bijective skew $PBW$ extension of a
left strongly Noetherian $K$-algebra $R$. Then $A$ is left
strongly Noetherian.
\end{theorem}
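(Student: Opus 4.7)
The plan is to reduce the assertion to the Hilbert Basis Theorem for skew $PBW$ extensions by showing that, for any commutative Noetherian $K$-algebra $C$, the tensor product $B:=C\otimes_K A$ is itself a bijective skew $PBW$ extension of $B_0:=C\otimes_K R$. Once this is established, the strongly Noetherian hypothesis on $R$ makes $B_0$ left Noetherian, and the Hilbert Basis Theorem for bijective skew $PBW$ extensions then yields that $B$ is left Noetherian; since $C$ was arbitrary, this is precisely the strongly Noetherian property for $A$.

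To verify Definition \ref{gpbwextension} for $B$ over $B_0$ I would take $y_i:=1\otimes x_i$ as the distinguished generators. Because $K$ is a field, tensoring the decomposition $A=\bigoplus_{\alpha\in\mathbb{N}^n}Rx^\alpha$ with $C$ gives $B=\bigoplus_\alpha B_0\,(1\otimes x^\alpha)$, so $\{y^\alpha\}$ is the required $B_0$-basis, and $B_0\subseteq B$ as a subring. Conditions (iii) and (iv) come from tensoring the identities $x_ir=\sigma_i(r)x_i+\delta_i(r)$ of Proposition \ref{sigmadefinition} and the quadratic relations of Definition \ref{gpbwextension}(iv) with $\mathrm{id}_C$: the extended maps $\sigma_i':=\mathrm{id}_C\otimes\sigma_i$ and $\delta_i':=\mathrm{id}_C\otimes\delta_i$ witness (iii) on simple tensors and extend to all of $B_0$ by additivity, while (iv) holds with scalars $c_{i,j}':=1\otimes c_{i,j}$ and residue in $B_0+\sum_k B_0\,y_k$.

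For the bijectivity part, since by hypothesis each $\sigma_i$ is an automorphism of $R$ and each $c_{i,j}$ is a unit of $R$, the extended data $\sigma_i'$ are automorphisms of $B_0$ and $1\otimes c_{i,j}$ is a unit with inverse $1\otimes c_{i,j}^{-1}$; hence $B$ is a bijective skew $PBW$ extension of $B_0$, and the Hilbert Basis Theorem of the preceding subsection concludes.

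The step I expect to be the main obstacle is the clean verification that $B_0$ genuinely embeds as a subring of $B$ carrying the announced left module structure, and that the extended map $\sigma_i'$ remains a well-defined injective ring endomorphism rather than merely an additive map on generators; both points follow from the flatness of $C$ as a $K$-vector space (automatic since $K$ is a field) and from the multiplication rule $(s\otimes a)(t\otimes a')=st\otimes aa'$ on the tensor product algebra, but they must be nailed down before the skew $PBW$ language can legitimately be invoked.
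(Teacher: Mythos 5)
The paper itself offers no proof of Theorem \ref{skewstrongly}: it is imported verbatim from \cite{LezamaHelbert}, so there is no internal argument to compare yours against. Judged on its own, your reduction is correct and self-contained. Once $C\otimes_K A$ is exhibited as a bijective skew $PBW$ extension of $C\otimes_K R$, the strongly Noetherian hypothesis on $R$ makes $C\otimes_K R$ left Noetherian and the Hilbert Basis Theorem quoted in the paper finishes the argument; the verification of conditions (i)--(iv) of Definition \ref{gpbwextension} for the generators $y_i=1\otimes x_i$ goes exactly as you describe, with the basis $\{1\otimes x^\alpha\}$ obtained by tensoring the free decomposition $A=\bigoplus_\alpha Rx^\alpha$ over the field $K$, with $\mathrm{id}_C\otimes\sigma_i$ bijective and $1\otimes c_{i,j}$ invertible because $A$ is bijective. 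This is arguably more economical than the alternative route through the associated graded ring and permanence results for Ore extensions of strongly Noetherian algebras, since it uses only statements already quoted in this paper.

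One point should be made explicit rather than left to your closing caveat. For $C\otimes_K A$ to carry the multiplication $(c\otimes a)(c'\otimes a')=cc'\otimes aa'$, and for $\mathrm{id}_C\otimes\sigma_i$ and $\mathrm{id}_C\otimes\delta_i$ to be well defined on $C\otimes_K R$, you need $K$ to be central in $A$ and $\sigma_i,\delta_i$ to be $K$-linear, i.e. $\sigma_i|_K=\mathrm{id}_K$ and $\delta_i|_K=0$. This is not automatic from Definition \ref{gpbwextension} for a skew $PBW$ extension of a $K$-algebra $R$, but it is forced by (and equivalent to) the implicit requirement that $A$ itself be a $K$-algebra, without which the strongly Noetherian property in the statement is not even defined. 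Flatness of $C$ over $K$ gives the injectivity of $\mathrm{id}_C\otimes\sigma_i$ (hence the nonvanishing of the coefficients $c'_{i,u}=(\mathrm{id}_C\otimes\sigma_i)(u)$ required by condition (iii)) and the embedding $C\otimes_K R\hookrightarrow C\otimes_K A$, but not the well-definedness of $\mathrm{id}_C\otimes\sigma_i$, which is a $K$-linearity issue, not a flatness issue. With that hypothesis recorded, your proof is complete.
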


\begin{theorem}[\cite{lezamalatorre}]\label{proposition16.5.7}
Any skew $PBW$ extension $A = \sigma(R)\langle
x_{1},\dots,x_{n}\rangle$ is a $FSG$ ring with semi-graduation
$A=\bigoplus_{k\geq 0} A_k$, where
\begin{center}
$A_k:=_R\langle x^\alpha\in Mon(A)|\deg(x^\alpha)=k\rangle$.
\end{center}
\end{theorem}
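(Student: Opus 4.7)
The plan is to verify in sequence the three axioms defining a semi-graded ($SG$) ring and the two extra conditions from the definition of $FSG$, extracting the multiplicative axiom from Theorem \ref{filteredskew}.

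First, I would observe that by Definition \ref{gpbwextension}(ii), $\text{Mon}(A)$ is a left $R$-basis of $A$. Since each standard monomial $x^\alpha$ has a well-defined total degree $|\alpha|\in\mathbb{N}$, partitioning this basis according to degree yields the direct-sum decomposition $A=\bigoplus_{k\geq 0}A_k$, where $A_k$ is the free left $R$-module on $\{x^\alpha\in \text{Mon}(A) : |\alpha|=k\}$. This gives axiom (i) of the $SG$ definition. Moreover $A_0=R\cdot 1=R$, so $1\in A_0$, which is axiom (iii); and each $A_k$ is free over $R=A_0$ with basis the finite set of degree-$k$ standard monomials in $n$ variables, so condition (3) of the $FSG$ definition also drops out. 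Condition (2) is immediate since $R$ together with $x_1,\dots,x_n$ generates $A$ by Definition \ref{gpbwextension}(ii).

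For the remaining multiplicative axiom $A_m A_n\subseteq A_0\oplus\cdots\oplus A_{m+n}$, I would appeal to Theorem \ref{filteredskew}. That theorem equips $A$ with the filtration $F_m=\{f\in A : \deg(f)\leq m\}$ and asserts that $A$ is filtered, so $F_m\cdot F_n\subseteq F_{m+n}$. The crucial identification is that $F_m=A_0\oplus\cdots\oplus A_m$: writing $f=\sum_\alpha r_\alpha x^\alpha$ in standard form, one has $\deg(f)=\max\{|\alpha| : r_\alpha\neq 0\}$, and this is $\leq m$ iff $r_\alpha=0$ for every $\alpha$ with $|\alpha|>m$, i.e.\ iff $f\in\bigoplus_{k\leq m}A_k$. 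Substituting, $A_mA_n\subseteq F_m\cdot F_n\subseteq F_{m+n}=A_0\oplus\cdots\oplus A_{m+n}$, which is axiom (ii).

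The only genuinely non-formal ingredient is the inclusion $F_m\cdot F_n\subseteq F_{m+n}$, which is precisely the content of Theorem \ref{filteredskew}; that theorem in turn rests on a careful analysis of how the defining relations $x_ir=\sigma_i(r)x_i+\delta_i(r)$ of Proposition \ref{sigmadefinition} together with the quadratic relations (iv) of Definition \ref{gpbwextension} push products of standard monomials back into standard form without raising total degree. Once that theorem is invoked, the remaining verifications are immediate bookkeeping from the PBW basis property, so no separate obstacle arises in the present argument.
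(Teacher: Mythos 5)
Your proof is correct. The paper does not reproduce a proof of this theorem (it is quoted from \cite{lezamalatorre}), but your argument --- reading off the direct-sum decomposition, $1\in A_0$, and the two extra $FSG$ conditions from the PBW basis, and obtaining axiom (ii) via the identification $F_m=A_0\oplus\cdots\oplus A_m$ with the filtration of Theorem \ref{filteredskew} --- is the standard route and is precisely the identification the paper itself invokes later, in the proof of Theorem \ref{corollary17.6.9}, where it observes that the filtration of Theorem \ref{filteredskew} coincides with the one in (\ref{equ17.5.1}).
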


\begin{example}\label{example1.10}
Many important algebras and rings coming from mathematical physics
are particular examples of skew $PBW$ extensions, see
\cite{Oswaldo} and \cite{Reyes}: Habitual ring of polynomials in
several variables, Weyl algebras, enveloping algebras of finite
dimensional Lie algebras, algebra of $q$-differential operators,
many important types of Ore algebras, algebras of diffusion type,
additive and multiplicative analogues of the Weyl algebra, dispin
algebra $\mathcal{U}(osp(1,2))$, quantum algebra
$\mathcal{U}'(so(3,K))$, Woronowicz algebra
$\mathcal{W}_{\nu}(\mathfrak{sl}(2,K))$, Manin algebra
$\mathcal{O}_q(M_2(K))$, coordinate algebra of the quantum group
$SL_q(2)$, $q$-Heisenberg algebra \textbf{H}$_n(q)$, Hayashi
algebra $W_q(J)$, differential operators on a quantum space
$D_{\textbf{q}}(S_{\textbf{q}})$, Witten's deformation of
$\mathcal{U}(\mathfrak{sl}(2,K))$, multiparameter Weyl algebra
$A_n^{Q,\Gamma}(K)$, quantum symplectic space
$\mathcal{O}_q(\mathfrak{sp}(K^{2n}))$, some quadratic algebras in
3 variables, some 3-dimensional skew polynomial algebras, among
many others. For a precise definition of any of these rings and
algebras see \cite{Oswaldo} and \cite{Reyes}.
\end{example}

\section{Parametrization of point modules of $FSG$ rings}

In this section we complete the parametrization of point modules
of $FSG$ rings that was initiated in \cite{Lezama-Gomez}. The main
result of the present section is Theorem \ref{theorem17.6.8}
below. For completeness, we include the basic facts of
parametrization studied in \cite{Lezama-Gomez}, omitting the
proofs.

\begin{definition}\label{definition17.5.14}
Let $B=\bigoplus_{n\geq 0}B_n$ be a $FSG$ ring that is generated
in degree $1$.
\begin{enumerate}
\item[\rm (i)]A point module for $B$ is a finitely $\mathbb{N}$-semi-graded $B$-module $M=\bigoplus_{n\in \mathbb{N}}M_n$
such that $M$ is cyclic, generated in degree $0$, i.e., there
exists an element $m_0\in M_0$ such that $M=Bm_0$, and
$\dim_{B_0}(M_n)=1$ for all $n\geq 0$.
\item[\rm (ii)]Two point modules $M$ and $M'$ for $B$ are isomorphic if there exists a homogeneous $B$-isomorphism between them.
\item[\rm (iii)]$P(B)$ is the collection of isomorphism classes of point modules for $B$.
\end{enumerate}
\end{definition}

The following result is the first step in the construction of the
geometric structure for $P(B)$.

\begin{theorem}\label{theorem16.5.2}
Let $B=\bigoplus_{n\geq 0}B_n$ be a $FSG$ ring generated in degree
$1$. Then, $P(B)$ has a Zariski topology generated by finite
unions of sets $V(J)$ defined by {\small
\begin{center}
$V(J):=\{M\in P(B)\mid Ann(M)\supseteq J\}$,
\end{center}}
where $J$ ranges the semi-graded left ideals of $B$.
\end{theorem}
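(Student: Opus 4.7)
The plan is to verify that the family of subsets of the form $V(J)$, with $J$ a semi-graded left ideal of $B$, together with their finite unions, satisfies the closure properties required of closed sets of a topology on $P(B)$. Everything reduces to three algebraic facts, one substantive and two formal.

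The first step is to identify the two trivial closed sets inside the family: $V(0) = P(B)$, because every annihilator contains $0$, and $V(B) = \emptyset$, because any point module $M$ satisfies $1\cdot m_0 = m_0 \neq 0$ in view of $\dim_{B_0}(M_0) = 1$, forbidding $B \subseteq Ann(M)$. Equivalently, the empty set is the empty union.

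The key step is the sum--intersection identity
\[
\bigcap_{\alpha \in I} V(J_\alpha) \;=\; V\Bigl(\sum_{\alpha \in I} J_\alpha\Bigr).
\]
Containment $\supseteq$ is immediate since $\sum_\alpha J_\alpha$ contains each $J_\alpha$; the converse is the observation that $J_\alpha \subseteq Ann(M)$ for all $\alpha$ forces $\sum_\alpha J_\alpha \subseteq Ann(M)$. The one non-formal ingredient is that $\sum_\alpha J_\alpha$ is itself a semi-graded left ideal of $B$; I would deduce this from Proposition \ref{proposition17.5.5}(iv), which characterizes semi-graded submodules by the property that every homogeneous component of each element again lies in the submodule --- a property manifestly preserved under arbitrary sums.

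With these in hand, declaring the closed sets of the topology to be those generated by the $V(J)$'s under finite unions and arbitrary intersections produces a legitimate topology on $P(B)$: the generating family contains $\emptyset$ and $P(B)$, is closed under arbitrary intersections by the key step (so in particular $V(J_1)\cap V(J_2) = V(J_1+J_2)$), and finite unions are adjoined by fiat; distributivity then shows this is consistent with the remaining topology axioms. The main obstacle, modest though it is, is the semi-gradedness of the sum of semi-graded ideals --- the only piece of genuine ring theory used in the proof.
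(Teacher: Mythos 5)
Your argument is correct, and it is essentially the standard proof that the paper delegates to \cite{Lezama-Gomez} (Theorem 9): the identities $V(0)=P(B)$, $V(B)=\emptyset$, and $\bigcap_{\alpha}V(J_\alpha)=V\bigl(\sum_{\alpha}J_\alpha\bigr)$, with the only substantive point being that $\sum_{\alpha}J_\alpha$ is again semi-graded, which follows exactly as you say from the componentwise criterion in Proposition \ref{proposition17.5.5}(iv). Your closing remark handles the one genuinely delicate point correctly: since $V(J_1)\cup V(J_2)$ need not be of the form $V(J)$ in the non-commutative setting, the finite unions must be adjoined as generators, and closure of the resulting family under finite unions and arbitrary intersections follows by distributivity, as you indicate.
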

\begin{proof}
See \cite{Lezama-Gomez}, Theorem 9.
\end{proof}

\begin{definition}
Let $B=\bigoplus_{n\geq 0}B_n$ be a $FSG$ ring generated in degree
$1$ such that $B_0$ is commutative and $B$ is a $B_0$-algebra. Let
$S$ be a commutative $B_0$-algebra. A $S$-point module for $B$ is
a $\mathbb{N}$-semi-graded $S\otimes_{B_0} B$-module $M$  which is
cyclic, generated in degree $0$, $M_n$ is a locally free
$S$-module with ${\rm rank}_S(M_n)=1$ for all $n\geq 0$, and
$M_0=S$. $P(B;S)$ will denote the set of $S$-point modules for
$B$.
\end{definition}

\begin{theorem}\label{theorem16.5.5}
Let $B=\bigoplus_{n\geq 0}B_n$ be a $FSG$ ring generated in degree
$1$ such that $B_0$ is commutative and $B$ is a $B_0$-algebra. Let
$\mathcal{B}_0$ be the category of commutative $B_0$-algebras and
let $\mathcal{S}et$ be the category of sets. Then, $P$ defined by
\begin{align*}
\mathcal{B}_0 & \xrightarrow{P} \mathcal{S}et\\
S & \mapsto P(B;S)\\
S\to T &\mapsto P(B;S)\to P(B;T), \ \text{given by}\ M\mapsto
T\otimes_S M
\end{align*}
is a covariant functor called the point functor for $B$.
\end{theorem}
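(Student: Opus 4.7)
The plan is to verify two things: first, that for every $B_0$-algebra morphism $f:S\to T$ and every $M\in P(B;S)$, the module $T\otimes_S M$ genuinely belongs to $P(B;T)$, so that $P$ is well-defined on morphisms; second, that $P$ respects identities and composition. The functoriality bookkeeping is essentially formal once the first step is in place, so the real work is to check that all the defining properties of a point module are preserved under base change.

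First I would fix $f:S\to T$ in $\mathcal{B}_0$ and $M\in P(B;S)$, and form $T\otimes_S M$. Because $M$ is an $S\otimes_{B_0}B$-module, the associativity isomorphism $T\otimes_S(S\otimes_{B_0}B)\cong T\otimes_{B_0}B$ endows $T\otimes_S M$ with a natural $T\otimes_{B_0}B$-module structure, where $T$ acts on the left tensor factor and $B$ acts through the original action on $M$. I would then give it the semi-graduation $(T\otimes_S M)_n:=T\otimes_S M_n$; condition (b) of the definition of semi-graded module is inherited from $M$ because $1\otimes$ turns the inclusion $B_m M_n\subseteq\bigoplus_{k\leq m+n}M_k$ into $B_m(T\otimes_S M_n)\subseteq\bigoplus_{k\leq m+n}T\otimes_S M_k$. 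The degree-zero piece is $T\otimes_S M_0=T\otimes_S S\cong T$, as required.

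Next I would verify the remaining conditions: cyclic generation in degree zero, local freeness of each graded piece, and rank one. If $m_0\in M_0$ generates $M$ over $S\otimes_{B_0}B$, then $1\otimes m_0\in(T\otimes_S M)_0$ generates $T\otimes_S M$ over $T\otimes_{B_0}B$, because $T\otimes_S M=T\otimes_S((S\otimes_{B_0}B)m_0)=(T\otimes_{B_0}B)(1\otimes m_0)$ by right-exactness of $T\otimes_S-$. For local freeness and rank one, I would invoke the standard fact that the property of being locally free of rank one is preserved by arbitrary base change of commutative rings: if $M_n$ becomes free of rank one after localizing $S$ at each prime, then $T\otimes_S M_n$ becomes free of rank one after localizing $T$ at each prime, via the compatibility of localization with tensor products. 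This is the step that carries the most content; everything else is algebraic formalism.

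Finally, for the functor axioms, the identity morphism $\mathrm{id}_S$ induces $S\otimes_S M\cong M$, which gives the identity map on $P(B;S)$, while for $f:S\to T$ and $g:T\to U$ the canonical associativity isomorphism $U\otimes_S M\cong U\otimes_T(T\otimes_S M)$ shows that $P(g\circ f)=P(g)\circ P(f)$. One should check that these canonical isomorphisms are in fact homogeneous isomorphisms of $U\otimes_{B_0}B$-modules, which is immediate from the degree-by-degree definition of the semi-graduation. The main obstacle, modest though it is, lies in the preservation of local freeness of rank one under base change; once that is granted, the rest of the verification is a routine assembly of tensor-product identities and the functorial properties of $T\otimes_S-$.
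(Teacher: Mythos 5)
The paper offers no argument of its own for this theorem --- the proof is deferred to \cite{Lezama-Gomez}, Theorem 10 --- so there is no in-paper proof to compare against; your verification is the standard one and is correct: the $T\otimes_{B_0}B$-structure via $T\otimes_S(S\otimes_{B_0}B)\cong T\otimes_{B_0}B$, the degreewise semi-graduation $(T\otimes_S M)_n=T\otimes_S M_n$ (using that tensor products commute with direct sums), preservation of cyclic generation by right-exactness, and preservation of local freeness of rank one under base change (checked prime by prime, $(T\otimes_S M_n)_{\mathfrak q}\cong T_{\mathfrak q}\otimes_{S_{\mathfrak p}}(M_n)_{\mathfrak p}$ with $\mathfrak p=f^{-1}(\mathfrak q)$) all go through. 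The one point to make explicit is that the functor laws hold only up to the canonical isomorphisms $S\otimes_S M\cong M$ and $U\otimes_T(T\otimes_S M)\cong U\otimes_S M$, so $P(B;S)$ must be read as the set of isomorphism classes of $S$-point modules (which is how the cited reference treats it) for $P$ to be literally a functor; you acknowledge this implicitly by checking that these isomorphisms are homogeneous, which is the right thing to do.
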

\begin{proof}
See \cite{Lezama-Gomez}, Theorem 10.
\end{proof}

\begin{definition}
Let $B=\bigoplus_{n\geq 0}B_n$ be a $FSG$ ring generated in degree
$1$ such that $B_0$ is commutative and $B$ is a $B_0$-algebra. We
say that a $B_0$-scheme $X$ parametrizes the point modules of $B$
if the point functor $P$ is naturally isomorphic to $h_X$.
\end{definition}

Taking $S=B_0$ we get that $P(B)\subseteq P(B;B_0)$. If $B_0=K$ is
a field, then clearly $P(B)=P(B;K)$.

\begin{theorem}\label{proposition17.6.7}
Let $B=\bigoplus_{n\geq 0}B_n$ be a $FSG$ ring generated in degree
$1$ such that $B_0=K$ is a field and $B$ is a $K$-algebra. Let $X$
be a $K$-scheme that parametrizes $P(B)$. Then, there exists a
bijective correspondence between the closed points of $X$ and
$P(B)$.
\end{theorem}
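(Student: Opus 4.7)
The plan is to unpack the definition of ``parametrizes $P(B)$'' and then perform a standard scheme-theoretic identification. By hypothesis, there is a natural isomorphism $P \cong h_X$ of functors $\mathcal{B}_0 \to \mathcal{S}et$. Evaluating at the object $S = B_0 = K$ of $\mathcal{B}_0$ yields a bijection of sets
\begin{equation*}
P(B;K) \cong h_X(K) = \mathrm{Hom}_{K\text{-sch}}(\mathrm{Spec}(K), X) = X(K).
\end{equation*}

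Next I would record that $P(B) = P(B;K)$: the paragraph immediately preceding the statement already gives the inclusion $P(B) \subseteq P(B;K)$, and the reverse is immediate once $B_0 = K$ is a field. Indeed, any $K$-point module $M$ has $M_0 = K$ and each $M_n$ locally free of rank $1$ over $K$, which over a field is the same as $\dim_K M_n = 1$, so $M$ already satisfies Definition \ref{definition17.5.14}. Chained with the previous display this produces a bijection $P(B) \longleftrightarrow X(K)$, which is the ``content'' of the theorem modulo the last step.

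The remaining step is the classical identification of $X(K)$ with the set of closed points of $X$. A morphism $\mathrm{Spec}(K) \to X$ is the same datum as a pair $(x,\varphi)$ consisting of a point $x \in X$ together with a $K$-algebra homomorphism $\varphi: \kappa(x) \to K$ from the residue field. For $X$ locally of finite type over $K$, the closed points of $X$ are precisely those with $\kappa(x)$ a finite extension of $K$; and once $K$ is algebraically closed (the case in which ``closed points'' coincides unambiguously with ``$K$-rational points'') every such $\kappa(x)$ equals $K$, so $\varphi$ is forced to be the identity and the correspondence is a bijection. This final identification is the main obstacle, and the only place where the hypothesis that $B_0=K$ is a field (and implicit standing assumptions on $X$ being of finite type and $K$ being, say, algebraically closed) is actually used; once it is granted, the theorem follows by concatenating the two displayed bijections above.
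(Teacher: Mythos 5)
The paper gives no argument here at all --- its ``proof'' is the single line ``See \cite{Lezama-Gomez}, Theorem 11'' --- so there is nothing to compare step by step; your chain $P(B)=P(B;K)\cong h_X(K)=X(K)\leftrightarrow\{\text{closed points of }X\}$ is the standard and surely the intended route, and the first two links are airtight: the paper itself records $P(B)=P(B;K)$ when $B_0=K$ is a field, and evaluating the natural isomorphism $P\cong h_X$ at the object $K$ of $\mathcal{B}_0$ gives exactly the bijection $P(B;K)\cong X(K)$.

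The one place you should be more careful is the last link, and you are right to single it out as ``the main obstacle.'' The injection $X(K)\hookrightarrow\{\text{closed points}\}$ is unconditional (a point with residue field $K$ is maximal in every affine open containing it, hence closed), but surjectivity genuinely fails without extra hypotheses: for $K$ not algebraically closed, a locally finite type $K$-scheme typically has closed points with $\kappa(x)$ a nontrivial finite extension of $K$, and these admit no $K$-point lying over them (a $K$-algebra homomorphism $\kappa(x)\to K$ forces $\kappa(x)=K$). Neither ``$X$ locally of finite type over $K$'' nor ``$K$ algebraically closed'' appears among the hypotheses of the theorem as stated, so as written your argument proves the statement only under those additional standing assumptions, which you flag but do not discharge. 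Since the paper delegates the entire proof to \cite{Lezama-Gomez}, the honest conclusion is that your proposal reconstructs the expected argument correctly modulo exactly this point, and that the literal statement needs either those hypotheses made explicit or a convention identifying ``closed points'' with ``$K$-rational points.''
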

\begin{proof}
See \cite{Lezama-Gomez}, Theorem 11.
\end{proof}

\begin{theorem}\label{theorem17.6.8}
Let $B=\bigoplus_{n\geq 0}B_n$ be a $FSG$ ring generated in degree
$1$ such that $B_0=K$ is a field and $B$ is a $K$-algebra. Then,
\begin{enumerate}
\item[\rm (i)]There is an injective function
\begin{center}
$(P(B)/\sim) \xrightarrow{\alpha} P(Gr(B))$,
\end{center}
where $\sim $ is the relation in $P(B)$ defined by
\begin{center}
$[M]\sim [M']\Leftrightarrow Gr(M)\cong Gr(M')$,
\end{center}
with $[M]$ the class of point modules isomorphic to the point
module $M$.
\item[\rm (ii)] Let $X$ be a $K$-scheme that parametrizes
$P(Gr(B))$. Then there is an injective function from $(P(B)/\sim)$
to the closed points of $X$:
\begin{center}
$(P(B)/\sim) \xrightarrow{\widetilde{\alpha}} X$.
\end{center}
\end{enumerate}
\end{theorem}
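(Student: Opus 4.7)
My plan is to define a map $\alpha$ by sending each point module $M$ of $B$ to the associated graded module $Gr(M)$, computed with respect to the standard filtration $F_n(M) := M_0 \oplus \cdots \oplus M_n$ induced by the semi-graduation. Axiom (b) of semi-graded modules makes $Gr(M) := \bigoplus_{n \geq 0} F_n(M)/F_{n-1}(M)$ a graded $Gr(B)$-module, and the natural isomorphism $Gr(M)_n \cong M_n$ gives $\dim_K Gr(M)_n = 1$. Before using this, I would verify that $Gr(B)$ is itself an $FSG$ $K$-algebra generated in degree $1$ with $Gr(B)_0 = K$, so that the theory of point modules applies to it: the first two properties follow from Proposition \ref{proposition17.5.5}(iii) and Remark \ref{remark18.1.7}(i), and generation in degree $1$ transfers from $B$ to $Gr(B)$ because $Gr(B)$ is honestly $\mathbb{N}$-graded, so that $Gr(B)_n = (Gr(B)_1)^n$.

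The central technical step is to show that $Gr(M)$ is cyclic as a $Gr(B)$-module, generated by $\overline{m_0}$ in degree $0$. Since $Gr(B)$ is generated in degree $1$, this reduces to showing the multiplication map $Gr(B)_1 \otimes_K Gr(M)_{n-1} \to Gr(M)_n$ is surjective for every $n \geq 1$, equivalently, that the $M_n$-component of $B_1 \cdot M_{n-1} \subseteq F_n(M)$ is nonzero. I would argue this by contradiction: if the $M_n$-component vanished for some $n$, then compatibility of the filtrations would give $B_1 \cdot F_{n-1}(M) \subseteq F_{n-1}(M)$, and induction on $k$ would yield $(B_1)^k m_0 \subseteq F_{n-1}(M)$ for all $k \geq 0$; since $B = K\langle B_1 \rangle$ implies $M = Bm_0$ is the $K$-span of $\{(B_1)^k m_0\}_{k \geq 0}$, this would force $M \subseteq F_{n-1}(M)$, contradicting $\dim_K M_n = 1$. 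I expect this cyclicity verification to be the main obstacle: the semi-graded setting allows higher-degree elements of $B$ to contribute to lower components of $M$, so strictness of the quotient $B \twoheadrightarrow M$ is not automatic and must be forced by combining the two hypotheses.

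Functoriality on isomorphism classes is immediate: any homogeneous $B$-isomorphism $M \cong M'$ of point modules preserves the semi-graduation, hence the filtrations $F_n(\cdot)$, and therefore induces a graded $Gr(B)$-isomorphism $Gr(M) \cong Gr(M')$. Thus $\alpha: [M] \mapsto [Gr(M)]$ is a well-defined map $P(B) \to P(Gr(B))$, and by the very definition of $\sim$ it factors as an injection $(P(B)/\sim) \xrightarrow{\alpha} P(Gr(B))$: indeed, $\alpha([M]) = \alpha([M'])$ means precisely $Gr(M) \cong Gr(M')$, which is the defining condition of $[M] \sim [M']$. This proves (i).

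For part (ii), I would apply Theorem \ref{proposition17.6.7} to $Gr(B)$, which we have verified is an $FSG$ $K$-algebra generated in degree $1$ with $Gr(B)_0 = K$: the theorem provides a bijective correspondence between $P(Gr(B))$ and the closed points of the parametrizing scheme $X$. Composing this bijection with the injection $\alpha$ from (i) produces the desired injection $\widetilde{\alpha}: (P(B)/\sim) \to X$ landing on the closed points of $X$.
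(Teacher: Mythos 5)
Your proposal is correct and follows essentially the same route as the paper: pass to $Gr(B)$ and $Gr(M)$ via the standard filtration, verify that $Gr(M)$ is a point module for the finitely graded algebra $Gr(B)$, note that injectivity of $\alpha$ is built into the definition of $\sim$, and obtain (ii) by applying Theorem \ref{proposition17.6.7} to $Gr(B)$. The only local divergence is the cyclicity of $Gr(M)$: the paper derives $\overline{m_n}=\overline{b_n}\cdot\overline{m_0}$ directly from $m_n=bm_0$ (implicitly taking $\deg(b)\leq n$), whereas your contradiction argument via $B_1F_{n-1}(M)\subseteq F_{n-1}(M)$ explicitly rules out the possibility that only higher-degree components of $b$ reach $M_n$, which is a slightly more careful treatment of the same point.
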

\begin{proof}
(i) We divide the proof of this part in three steps.

\textit{Step 1}. Note that $Gr(B)$ is a finitely graded
$K$-algebra generated in degree $1$: According to Proposition
\ref{proposition17.5.5}, $Gr(B)$ is $\mathbb{N}$-graded with
$Gr(B)_n\cong B_n$ for all $n\geq 0$ (isomorphism of $K$-vector
spaces), in particular, $Gr(B)_0=K$; if $x_1,\dots,x_m\in B_1$
generate $B$ as $K$-algebra, then
$\overline{x_1},\dots,\overline{x_m}\in Gr(B)_1$ generate $Gr(B)$
as $K$-algebra.

\textit{Step 2}. Let $[M]\in P(B)$, with $M=\bigoplus_{n\geq
0}M_n$, $M=Bm_0$, with $m_0\in M_0$, and $\dim_{K}(M_n)=1$ for all
$n\geq 0$. As in Proposition \ref{proposition17.5.5}, we define
\begin{center}
$Gr(M):=\bigoplus_{n\geq 0}Gr(M)_n$,

$Gr(M)_n:=\frac{M_0\oplus \cdots \oplus M_n}{M_0\oplus \cdots
\oplus M_{n-1}}\cong M_n$ (isomorphism of $K$-vector spaces).
\end{center}
The structure of $Gr(B)$-module for $Gr(M)$ is given by
bilinearity and the product
\begin{center}
$\overline{b_p}\cdot \overline{m_n}:=\overline{b_pm_n}\in
Gr(M)_{p+n}$.
\end{center}
This product is well-defined: If $\overline{b_p}=\overline{c_p}$,
then $b_p-c_p\in B_0\oplus \cdots \oplus B_{p-1}$ and hence
$(b_p-c_p)m_n\in M_0\oplus \cdots \oplus M_{p-1+n}$, so in
$Gr(M)_{p+n}$ we have $\overline{(b_p-c_p)m_n}=\overline{0}$,
i.e., $\overline{b_pm_n}=\overline{c_pm_n}$; now, if
$\overline{m_n}=\overline{l_n}$, then $m_n-l_n\in M_0\oplus \cdots
\oplus M_{n-1}$, whence $b_p(m_n-l_n)\in M_0\oplus \cdots \oplus
M_{p+n-1}$, so in $Gr(M)_{p+n}$ we have
$\overline{b_p(m_n-l_n)}=\overline{0}$, i.e.,
$\overline{b_pm_n}=\overline{b_pl_n}$. The associative law for
this product holds:
\begin{center}
$\overline{b_q}\cdot (\overline{b_p}\cdot
\overline{m_n})=\overline{b_q}\cdot
(\overline{b_pm_n})=\overline{b_q(b_pm_n)}=\overline{(b_qb_p)m_n}=\overline{(c_0+\cdots+c_{p+q})m_n}
=\overline{c_{p+q}m_n}=(\overline{b_q}\
\overline{b_p})\overline{\cdot m_n}$.
\end{center}
Finally, $\overline{1}\cdot
\overline{m_n}=\overline{m_n}$.

Observe that $Gr(M)=Gr(B)\cdot \overline{m_0}$: Let
$\overline{m_n}\in Gr(M)_n$, with $m_n\in M_n$, we have
$\overline{m_n}=\overline{bm_0}=\overline{(b_0+\cdots+b_n)m_0}=\overline{b_0m_0+\cdots+b_{n-1}m_0+b_nm_0}=\overline{b_nm_0}
=\overline{b_n}\cdot\overline{m_0}$.

\textit{Step 3}. We define
\begin{align*}
P(B) & \xrightarrow{\alpha'} P(Gr(B))\\
[M]\ & \mapsto [Gr(M)]
\end{align*}
It is clear that $\alpha'$ is well-defined, i.e., if $M\cong M'$,
then $Gr(M)\cong Gr(M')$.

Note that the relation $\sim$ defined in the statement of the
theorem is an equivalence relation, let $[[M]]$ be the class of
$[M]\in P(B)$, then we define
\begin{align*}
(P(B)/\sim) & \xrightarrow{\alpha} P(Gr(B))\\
[[M]]\ & \mapsto \alpha'([M])=[Gr(M)].
\end{align*}
It is clear that $\alpha$ is a well-defined injective function.

(ii) This follows from (i) and applying Theorem
\ref{proposition17.6.7} to $Gr(B)$.
\end{proof}



\section{Point modules of skew $PBW$ extensions}

In this final section we apply the results of the previous section
to compute the point modules of some skew $PBW$ extensions, and
hence, of some important examples of non $\mathbb{N}$-graded
quantum algebras.

\begin{theorem}\label{corollary17.6.9}
Let $K$ be a field and $A=\sigma(K)\langle x_1,\dots,x_n\rangle$
be a bijective skew $PBW$ extension such that $A$ is a
$K$-algebra. Then, there exists $m_0\geq 1$ such that there is an
injective function from $P(A)/\sim$ to the closed points of
$X_{m_0}$:
\begin{center}
$(P(A)/\sim) \longrightarrow X_{m_0}$.
\end{center}
\end{theorem}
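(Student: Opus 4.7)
The plan is to apply Theorem \ref{theorem17.6.8}(ii) with $B := A$, taking as the parametrizing scheme $X$ the projective variety $X_{m_0}$ produced by Corollary \ref{corollaryA.3.8} when applied to $Gr(A)$. The proof then reduces to checking the hypotheses of both results, which is a matter of stringing together the structural theorems for skew $PBW$ extensions already at our disposal.

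For Theorem \ref{theorem17.6.8}, Theorem \ref{proposition16.5.7} equips $A$ with the explicit semi-graduation $A_k = {}_K\langle x^\alpha \in Mon(A) \mid |\alpha| = k \rangle$; hence $A$ is a $FSG$ ring with $A_0 = K$ generated in degree one by $x_1,\ldots,x_n$, exactly the standing hypothesis of that theorem.

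For Corollary \ref{corollaryA.3.8}, I need $Gr(A)$ to be a finitely graded strongly noetherian $K$-algebra generated in degree one. By Theorem \ref{filteredskew}, $Gr(A)$ is a quasi-commutative bijective skew $PBW$ extension of $K$; by the argument in Step 1 of the proof of Theorem \ref{theorem17.6.8}, this makes $Gr(A)$ genuinely $\mathbb{N}$-graded with $Gr(A)_0 = K$ and generated in degree one by $\overline{x_1},\ldots,\overline{x_n}$, hence finitely graded in the sense of Section 1.1. To upgrade to the strongly noetherian property, I observe that $K$ is trivially left strongly noetherian (for any commutative noetherian $K$-algebra $C$ one has $C \otimes_K K = C$), and then invoke Theorem \ref{skewstrongly} for the bijective skew $PBW$ extension $Gr(A)$ of $K$.

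With both sets of hypotheses verified, Corollary \ref{corollaryA.3.8} furnishes the integer $m_0 \geq 1$ together with a projective $K$-scheme $X_{m_0}$ whose closed points parametrize $P(Gr(A))$; feeding $X = X_{m_0}$ into Theorem \ref{theorem17.6.8}(ii) yields the claimed injection $(P(A)/\sim) \to X_{m_0}$. I do not foresee a substantive obstacle; the only cosmetic point requiring care is that the $\mathbb{N}$-grading on $Gr(A)$ induced by the degree filtration is precisely the one used by Proposition \ref{17.3.6} and Corollary \ref{corollaryA.3.8}, which is immediate from Remark \ref{remark18.1.7}(i).
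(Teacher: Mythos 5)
Your proposal is correct and follows essentially the same route as the paper: identify $A$ as a $FSG$ ring generated in degree one via Theorem \ref{proposition16.5.7}, pass to $Gr(A)$ using Theorems \ref{filteredskew} and \ref{skewstrongly}, and combine Corollary \ref{corollaryA.3.8} with Theorem \ref{theorem17.6.8}(ii). The only detail the paper makes explicit that you leave implicit is that $Gr(A)$ is, by Theorem \ref{skewore}, the multiparametric quantum affine space $K_{\textbf{q}}[x_1,\dots,x_n]$, which is what guarantees the finite presentation by homogeneous degree-two relations required to invoke Corollary \ref{corollaryA.3.8}; your appeal to quasi-commutativity supplies the same relations, so this is cosmetic.
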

\begin{proof}
From Proposition \ref{proposition16.5.7}, $A$ is a $FSG$ ring
generated in degree $1$ with $A_0=K$, and by hypothesis, $A$ is a
$K$-algebra. Observe that the filtration in Theorem
\ref{filteredskew} coincides with the one in (\ref{equ17.5.1}),
thus, from Theorem \ref{skewore} we get that $Gr(A)$ is the
$n$-multiparametric quantum space $K_{\textbf{q}}[x_1,\dots,x_n]$
(in $A$, $x_ir=rx_i$, for all $r\in K$ and $1\leq i\leq n$). It is
clear that $Gr(A)$ is a finitely graded algebra with finite
presentation as in Proposition \ref{17.3.6}, moreover, $Gr(A)$ is
strongly noetherian (Theorem \ref{skewstrongly}), so the claimed
follows from Corollary \ref{corollaryA.3.8} and Theorem
\ref{theorem17.6.8}.
\end{proof}

For the exact definition of any of the quantum algebras in the
following examples see \cite{Oswaldo} and \cite{Reyes}.

\begin{example}\label{example3.2}
The following examples of skew $PBW$ extensions satisfy the
hypothesis of Theorem \ref{corollary17.6.9}, and hence, for them
there is $m_0\geq 1$ such that there exists an injective function
from $P(A)/\sim $ to the closed points of $X_{m_0}$.

\begin{enumerate}

\item The enveloping algebra of a Lie algebra $\mathcal{G}$ of
dimension $n$: In this case $Gr(A)=K[x_0,\dots,x_{n-1}]$, so
$m_0=1$ and $X_1=\mathbb{P}^{n-1}$ (Example \ref{17.3.2}). We have
a similar situation for the algebra $S_h$ of shift operators: In
this case $Gr(A)=K[x_0,x_{1}]$, so $m_0=1$ and
$X_1=\mathbb{P}^{1}$.

\item Quantum algebra $\mathcal{U}'(so(3,K))$, with $q\in
K-\{0\}$: Since in this case $Gr(A)$ is a $3$-multiparametric
quantum affine space, then from Example \ref{example17.6.11} we
get that $m_0=2$ and $X_2$ is as in (\ref{equation17.6.10}). We
have similar descriptions for the dispin algebra $\cU(osp(1,2))$,
the Woronowicz algebra $\cW_{\nu}(\mathfrak{sl}(2,K))$, where $\nu
\in K-\{0\}$ is not a root of unity, nine types of $3$-dimensional
skew polynomial algebras, and the multiplicative analogue of the
Weyl algebra with $3$ variables.
\item For the following algebras,
$Gr(A)=K_{\textbf{q}}[x_1,\dots,x_n]$, so from Example
\ref{remark17.6.11} we conclude that $m_0=2$ and $X_2$ is as in
(\ref{equation17.6.4}): The $q$-Heisenberg algebra, the algebra of
linear partial $q$-dilation operators, the algebra $D$ for
multidimensional discrete linear systems, the algebra of linear
partial shift operators.
\end{enumerate}
\end{example}

\begin{example}\label{example3.3}
The following algebras are not skew $PBW$ extensions of the base
field $K$, however, they satisfy the hypothesis of Corollary
\ref{corollaryA.3.8}, and hence, for them there is $m_0\geq 1$
such that $X_{m_0}$ parametrizes $P(A)$ and there exists a
bijective correspondence between the closed points of $X_{m_0}$
and $P(A)$: The algebras of diffusion type, the algebra
$\textbf{U}$, the Manin algebra, or more generally, the algebra
$\mathcal{O}_q(M_n(K))$ of quantum matrices, some quadratic
algebras in $3$ variables, the quantum symplectic space
$\mathcal{O}_q(\mathfrak{sp}(K^{2n}))$.
\end{example}


\end{document}